\crefname{section}{Section}{Sections}
\crefname{subsection}{\S}{\S\S}
\theoremstyle{plain}
\newtheorem{lemma}{Lemma}[section]
\newtheorem{proposition}[lemma]{Proposition}
\newtheorem{corollary}[lemma]{Corollary}
\newtheorem{theorem}[lemma]{Theorem}
\newtheorem{conjecture}[lemma]{Conjecture}
\theoremstyle{nonumberplain}
\theoremstyle{plain}
\newtheorem{definition}[lemma]{Definition}
\newtheorem{remark}[lemma]{Remark}
\crefname{definition}{definition}{definitions}
\crefname{lemma}{lemma}{lemmas}
\crefname{proposition}{proposition}{propositions}
\crefname{ex}{example}{examples}
\crefname{remark}{remark}{remarks}
\crefname{corollary}{corollary}{corollaries}
\crefname{theorem}{theorem}{theorems}
\crefname{equation}{}{}
\theoremstyle{nonumberplain}
\newtheorem{proof}{Proof}
\newtheorem{proof of main}{Proof of \Cref{th.main}}
\newtheorem{proof of res1}{Proof of \Cref{th.res1}}
\newtheorem{proof of res2}{Proof of \Cref{th.res2}}
\newcommand\bC{{\mathbb C}}
\newcommand\bF{{\mathbb F}}
\newcommand\bN{{\mathbb N}}
\newcommand\bT{{\mathbb T}}
\newcommand\bZ{{\mathbb Z}}
\newcommand\fg{\mathfrak{g}}
\newcommand\fG{\mathfrak{G}}
\newcommand\fh{\mathfrak{h}}
\newcommand\fgl{\mathfrak{gl}}
\newcommand\fsl{\mathfrak{sl}}
\newcommand\cC{{\mathcal C}}
\DeclareMathOperator{\End}{\mathrm{End}}
\DeclareMathOperator{\wt}{\mathrm{wt}}
\newcommand{\define}[1]{{\em #1}}
\newcommand{\cat}[1]{\textsc{#1}}
\newcommand{\qedhere}{\mbox{}\hfill\ensuremath{\blacksquare}}
\title{Three results on representations of Mackey Lie algebras}
\author{Alexandru Chirvasitu}
\begin{document}

\date{}

\maketitle

\begin{abstract}
I. Penkov and V. Serganova have recently introduced, for any non-degenerate pairing $W\otimes V\to\bC$ of vector spaces, the Lie algebra $\fgl^M=\fgl^M(V,W)$ consisting of endomorphisms of $V$ whose duals preserve $W\subseteq V^*$. In their work, the category $\bT_{\fgl^M}$ of $\fgl^M$-modules which are finite length subquotients of the tensor algebra $T(W\otimes V)$ is singled out and studied. In this note we solve three problems posed by these authors concerning the categories $\bT_{\fgl^M}$. Denoting by $\bT_{V\otimes W}$ the category with the same objects as $\bT_{\fgl^M}$ but regarded as $V\otimes W$-modules, we first show that when $W$ and $V$ are paired by dual bases, the functor $\bT_{\fgl^M}\to \bT_{V\otimes W}$ taking a module to its largest weight submodule with respect to a sufficiently nice Cartan subalgebra of $V\otimes W$ is a tensor equivalence. Secondly, we prove that when $W$ and $V$ are countable-dimensional, the objects of $\bT_{\End(V)}$ have finite length as $\fgl^M$-modules. Finally, under the same hypotheses, we compute the socle filtration of a simple object in $\bT_{\End(V)}$ as a $\fgl^M$-module. 
\end{abstract}

\noindent {\em Keywords: Mackey Lie algebra, finite length module, large annihilator, weight module, socle filtration}

\tableofcontents

%%%%%%%%%%%%%%%%%%%%%%%%%%%%%%%%%%%%%%%%%%%%%%%%%%%%%%%%%%%%%%%%%%%%%%%%%%%%%%%%%%%%%%%%%%%%%%%%%%%%%%%%%%%%%%%%%%
%%%%%%%%%%%%%%%%%%%%%%%%%%%%%%%%%%%%%%%%%%%%%%%%%%%%%%%%%%%%%%%%%%%%%%%%%%%%%%%%%%%%%%%%%%%%%%%%%%%%%%%%%%%%%%%%%%

\section*{Introduction}

In the recent paper \cite{Mackey}, the authors study various categories of representations for Lie algebras associated to pairs of complex vector spaces $V,W$ endowed with a non-degenerate bilinear form $W\otimes V\to \bC$. This datum realizes $W$ as a subspace of the full dual $V^*$, and vice versa. The associated Mackey Lie algebra $\fg^M=\fgl^M(V,W)$ is then simply the set of all endomorphisms of $V$ whose duals leave $W\subseteq V^*$ invariant. It can be shown that the definition is symmetric in the sense that reversing the roles of $V$ and $W$ produces canonically isomorphic Lie algebras. When $W=V^*$, the resulting Lie algebra is simply $\End(V)$.  

Categories $\bT_{\fg^M}$ of $\fg^M$-representations are then introduced. They consist of modules for which all elements have appropriately large annihilators; see \cite[$\S$ 7.3]{Mackey}. One remarkable result is that all these categories, for all possible non-degenerate pairs $(V,W)$, are in fact equivalent as tensor categories (i.e. symmetric monoidal abelian categories). Moreover, they are also equivalent to the categories $\bT_{\fsl(V,W)}$ from \cite[$\S$ 3.5]{Mackey} and $\bT=\bT_{\fsl(\infty)}$ introduced and studied earlier in \cite{DPS}; all of this follows from \cite[Theorems 5.1 and 7.9]{Mackey}. 

In view of the abstract equivalence between $\bT_{\End(V)}\simeq \bT_{\fsl(V,W)}$ noted above, it is a natural problem to try to find as explicit and natural a functor as possible that implements this equivalence. In order to do this, we henceforth specialize to the case when $W=V_*$ is a vector space whose pairing with $V$ is given by a pair of dual bases $v_\gamma\in V$, $v_\gamma^*\in V_*$ for $\gamma$ ranging over some (possibly uncountable) set $I$. This assumption ensures the existence of a so-called \define{local Cartan subalgebra} $\fh\subseteq\fsl(V,V_*)$ \cite[1.4]{Mackey}.  

Denote $\fg=\fsl(V,V_*)$. In our setting, for a local Cartan subalgebra $\fh\subseteq\fg$, let $\Gamma^{\wt}_{\fh}$ be the functor from $\End(V)$-modules to $\fg$-modules which picks out the $\fh$-weight part of a representation. Similarly, denote by $\Gamma^{\wt}$ the functor $\bigcap_{\fh}\Gamma^{\wt}_{\fh}$, where the intersection ranges over all local Cartan subalgebras of $\fg$. We will abuse notation and denote by these same symbols the restrictions of $\Gamma^{\wt}_{\fh}$ and $\Gamma^{\wt}$ to various categories of ${\End(V)}$ - modules.  

With these preparations (and keeping the notations we've been using), the following seems reasonable (\cite[8.4]{Mackey}).

\begin{conjecture}\label{conj.Mackey}
The functor $\Gamma^{\wt}$ implements an equivalence from $\bT_{\End(V)}$ onto $\bT_{\fg}$.  
\end{conjecture}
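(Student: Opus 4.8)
The plan is to realize $\Gamma^{\wt}$ as the canonical exact extension, to the abelian category $\bT_{\End(V)}$, of an equivalence between the additive categories of \emph{tensor modules}. Write $T^{p,q}_{\End(V)}=V^{\otimes p}\otimes(V^*)^{\otimes q}$ and $T^{p,q}_{\fg}=V^{\otimes p}\otimes V_*^{\otimes q}$. Both $\bT_{\End(V)}$ and $\bT_{\fg}$ are copies of the category $\bT$, hence governed by these modules: the $T^{p,q}$ are injective, every object embeds in a finite direct sum of them, and the abelian category together with its exact structure is reconstructed from the additive category of (finite sums of) tensor modules and its homological data. It therefore suffices to prove the following. (a) $\Gamma^{\wt}$ is a well-defined additive left-exact functor $\bT_{\End(V)}\to\bT_{\fg}$. (b) $\Gamma^{\wt}(T^{p,q}_{\End(V)})=T^{p,q}_{\fg}$, and $\Gamma^{\wt}$ induces an isomorphism $\operatorname{Hom}_{\End(V)}(T^{p,q},T^{p',q'})\xrightarrow{\sim}\operatorname{Hom}_{\fg}(T^{p,q}_{\fg},T^{p',q'}_{\fg})$ for all $p,q,p',q'$. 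Granting these, the exact extension of the resulting equivalence of tensor-module categories to a functor $\bT_{\End(V)}\to\bT_{\fg}$ is an equivalence and must be isomorphic to $\Gamma^{\wt}$, since two left-exact functors that are isomorphic on injective objects are isomorphic throughout; this proves \Cref{conj.Mackey}.

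The mechanism behind (a) is that, for a local Cartan $\fh=\bigcup_n\fh_n$ subordinate to an exhaustion $V=\bigcup_nV_n$, every finite-dimensional reductive subalgebra $\fsl(V_n)$ acts semisimply on each object of $\bT_{\End(V)}$: this is transparent for $V$ and for $V^*$ — the latter being, $\fsl(V_n)$-isotypically, a copy of the dual natural module plus a trivial module — and passes to $T(V^*\otimes V)$ and to subquotients by Weyl's theorem. Hence $\Gamma^{\wt}_{\fh}(M)$ is exactly the subspace of $\fh$-locally finite vectors of $M$: a finite-dimensional $\fh$-stable subspace $U$ is, for each $n$, a sum of $\fh_n$-weight spaces, and since $U$ is finite-dimensional these decompositions stabilize to a decomposition of $U$ into $\fh$-weight spaces, while conversely weight vectors are $\fh$-locally finite. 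Thus $\Gamma^{\wt}_{\fh}$ is a left-exact subfunctor of the restriction functor taking values in $\fg$-submodules, and the same holds for $\Gamma^{\wt}=\bigcap_\fh\Gamma^{\wt}_\fh$. That $\Gamma^{\wt}(M)$ lies in $\bT_{\fg}$ reduces to finite length over $\fg$: by the finite-length theorem of this paper $M$ has finite length over $\fgl^M(V,V_*)$, its simple constituents are the standard simple objects of $\bT$ and remain simple over the finitary subalgebra $\fg$, so $M|_{\fg}$ and hence its submodule $\Gamma^{\wt}(M)$ has finite length over $\fg$; the large-annihilator condition is inherited.

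For (b), the same stabilization argument shows that a finite-dimensional $\fh$-stable subspace of $V^*$ lies in the span of the basis of $V_*$ dual to an $\fh$-eigenbasis of $V$, which is all of $V_*$ by the locality of $\fh$; applying this to each tensor factor gives $\Gamma^{\wt}_{\fh}(T^{p,q}_{\End(V)})=T^{p,q}_{\fg}$ for every $\fh$, whence $\Gamma^{\wt}(T^{p,q}_{\End(V)})=T^{p,q}_{\fg}$. On morphisms, $\Gamma^{\wt}$ is injective because $T^{p,q}_{\fg}$ generates $T^{p,q}_{\End(V)}$ as an $\End(V)$-module (already $\End(V)\cdot\phi=V^*$ for any nonzero $\phi\in V_*$, and one bootstraps to general $p,q$); and both Hom-spaces have the same finite dimension, each being spanned by the composites of contractions and permutations of tensor factors, by the first fundamental theorem of invariant theory — equivalently, by the abstract equivalence $\bT_{\End(V)}\simeq\bT_{\fg}$ of \cite{Mackey}, which matches up tensor modules. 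Hence the injection is an isomorphism, so $\Gamma^{\wt}$ is an equivalence of tensor-module categories. As the identifications above are manifestly compatible with contractions and with permutations of factors, $\Gamma^{\wt}$ is symmetric monoidal on tensor modules and therefore, after extension, on all of $\bT_{\End(V)}$, yielding the tensor-equivalence strengthening.

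The genuine difficulty, I expect, lies in the homological input taken for granted above: that the restriction to $\fg$ of an object of $\bT_{\End(V)}$ has finite length — invoked here through the companion finite-length theorem, which is proved under a countability hypothesis, so that for uncountable $V$ one would have to extend it or replace it by a direct argument bounding weight multiplicities — and that $\bT$ is reconstructed from its tensor modules, which rests on the injectivity of the $T^{p,q}$ and the highest-weight/Koszul structure of $\bT$. The socle-filtration computation for simple objects viewed as $\fgl^M$-modules, the third result of this paper, is the natural device for the first point, since it records precisely how a simple object embeds in its restriction to a smaller Lie algebra. The remaining ingredients — the weight-space bookkeeping of (a) and (b) and the formal extension step — are routine.
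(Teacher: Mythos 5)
Your step (a) contains the real gap. You argue that $\Gamma^{\wt}(M)$ lands in $\bT_{\fg}$ because $M|_{\fg}$ has finite length over $\fg$, deducing this from \Cref{th.res1} plus the assertion that the $\fg^M$-composition factors of $M$ ``are the standard simple objects of $\bT$ and remain simple over the finitary subalgebra $\fg$.'' Both claims are false: by \Cref{co.res1} and \Cref{co.res2} the $\fg^M$-composition factors are the modules $(V^*/V_*)_\lambda\otimes V_{\mu,\nu}$, not standard simples of $\bT$, and they do not stay simple over $\fg$, since $\fg=\fsl(V,V_*)$ consists of finite-rank operators and hence acts trivially on $V^*/V_*$. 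Already $V^*|_{\fg}$ has the trivial quotient $V^*/V_*$ of infinite length, so no tensor module with a dual factor restricts to a finite-length $\fg$-module. The membership you need is true, but it should come from your own step (b) together with left exactness: $M$ embeds into a finite direct sum of modules $(V^*)^{\otimes m}\otimes V^{\otimes n}$, so $\Gamma^{\wt}(M)$ embeds into a finite direct sum of $V_*^{\otimes m}\otimes V^{\otimes n}$, which are finite-length objects of $\bT_{\fg}$. In particular the appeal to \Cref{th.res1}, and your suggestion that \Cref{th.res2} is the key device, is misplaced: those results are proved afterwards, only under a countability hypothesis, and play no role in the paper's proof of the equivalence, which holds for arbitrary dual-basis pairings; importing them here both risks circularity and needlessly restricts the statement.

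On the overall route: the paper proves the stronger \Cref{th.main} by checking that $\Gamma^{\wt}_{\fh}$, for the single fixed local Cartan $\fh$, is a left-exact \emph{tensor} functor sending the pairing $V^*\otimes V\to\bC$ to $V_*\otimes V\to\bC$ --- monoidality being the only nontrivial point, handled by identifying $\Gamma^{\wt}_{\fh}$ with the functor $\cat{Fin}$ for $U(\fh)$ via \Cref{le.main} and \Cref{le.weight_prod} --- and then invoking the Sam--Snowden universal property (\Cref{th.SS}, \Cref{co.main}), which yields fullness, faithfulness and essential surjectivity at once, with no Hom computations; the conjecture for $\Gamma^{\wt}=\bigcap_{\fh'}\Gamma^{\wt}_{\fh'}$ follows because objects of $\bT_{\fg}$ are weight modules for every local Cartan, forcing $\Gamma^{\wt}=\Gamma^{\wt}_{\fh}$. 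Your alternative --- matching the injective tensor modules and their Hom spaces by invariant theory and extending by left exactness --- could be repaired along the lines above, but you would still owe the extension and naturality bookkeeping, a proof that $\Gamma^{\wt}_{\fh'}(V^*)=V_*$ for \emph{every} local Cartan $\fh'$ (which the paper's identity $\Gamma^{\wt}=\Gamma^{\wt}_{\fh}$ sidesteps), and the monoidal compatibility of the extension if you want the tensor form of the result.
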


One of the main results of this note is a proof of this conjecture. The outline of the note is as follows: 

In the next section we prove \Cref{conj.Mackey}, making use of the results in \cite{SS} on a certain universality property for the category $\bT_{\fg}$.

In \Cref{se.res1} we specialize to a pairing $V_*\otimes V\to\bC$ of countable-dimensional vector spaces $V$, $V_*$. In this case, noting that $V^*/V_*$ is a simple $\fg^M=\fsl^M(V,V_*)$-module, the authors of \cite{Mackey} ask whether {\it all} objects of $\bT_{\End(V)}$ are finite-length $\fg^M$-modules. We show that this is indeed the case in \Cref{th.res1}.

Finally, \Cref{th.res2} in \Cref{se.res2} contains the description of the socle filtration as a $\fg^M$-module of a simple object in $\bT_{\End(V)}$. This solves a third problem posed in the cited paper.

%%%%%%%%%%%%%%%%%%%%%%%%%%%%%%%%%%%%%%%%%%%%%%%%%%%%%%%%%%%%%%%%%%%%%%%%%%%%%%%%%%%%%%%%%%%%%%%%%%%%%%%%%%%%%%%%%%
\subsection*{Acknowledgements}

I would like to thank Ivan Penkov and Vera Serganova for useful discussions on the contents of \cite{DPS,Mackey} and for help editing the manuscript. 

This work was partially supported by the Danish National Research Foundation through the QGM Center at Aarhus University, and by the Chern-Simons Chair in Mathematical Physics at UC Berkeley.

%%%%%%%%%%%%%%%%%%%%%%%%%%%%%%%%%%%%%%%%%%%%%%%%%%%%%%%%%%%%%%%%%%%%%%%%%%%%%%%%%%%%%%%%%%%%%%%%%%%%%%%%%%%%%%%%%%
%%%%%%%%%%%%%%%%%%%%%%%%%%%%%%%%%%%%%%%%%%%%%%%%%%%%%%%%%%%%%%%%%%%%%%%%%%%%%%%%%%%%%%%%%%%%%%%%%%%%%%%%%%%%%%%%%%
\section{Explicit equivalence between \texorpdfstring{$\bT_{\End(V)}$}{TEnd(V)} and \texorpdfstring{$\bT_{\fg}$}{Tg}}\label{se.main}

We will actually prove a slightly strengthened version of \Cref{conj.Mackey}. Before formulating it, recall our setting: We are considering a pairing between $V$ and $V_*$ determined by dual bases $v_\gamma\in V$ and $v_\gamma^*\in V_*$, and $\fg$ stands for $\fsl(V,V_*)$. By $\fh\subseteq\fg$ we denote the local Cartan subalgebra spanned linearly by the elements $v_\gamma\otimes v_\gamma^*-v_{\bar\gamma}\otimes v_{\bar\gamma}^*\in \fg\subseteq V\otimes V_*$ for $\gamma\ne\bar\gamma\in I$.  

Throughout, `tensor category' means symmetric monoidal, $\bC$-linear and abelian. Similarly, tensor functors are symmetric monoidal and $\bC$-linear, and tensor natural transformations are symmetric monoidal.  

Our main result in the present section reads as follows.

\begin{theorem}\label{th.main}
The functor $\Gamma^{\wt}_{\fh}$ implements a tensor equivalence from $\bT_{\End(V)}$ onto $\bT_{\fg}$. 
\end{theorem}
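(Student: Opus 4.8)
The plan is to realise $\Gamma^{\wt}_{\fh}$ as an exact, $\bC$-linear, symmetric monoidal functor from $\bT_{\End(V)}$ to $\bT_{\fg}$ that carries the natural module $V$ and its full dual $V^*$ to the natural and conatural $\fg$-modules $V$ and $V_*$ respectively, and then to use the universal property of $\bT_{\fg}$ from \cite{SS} to conclude that any such functor is forced to be an equivalence.

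First I would check that $\Gamma^{\wt}_{\fh}$ is well behaved. It is the composite of the exact, strict symmetric monoidal restriction functor $\bT_{\End(V)}\to\fg\text{-mod}$ along $\fg=\fsl(V,V_*)\subseteq\End(V)$ with the operation $M\mapsto\bigoplus_\lambda M_\lambda$ of extracting the sum of the $\fh$-weight spaces. Since $\fh$ is a local Cartan, $\fg$ is spanned by $\fh$ together with its $\fh$-root spaces, so $\bigoplus_\lambda M_\lambda$ is automatically a $\fg$-submodule — the largest weight submodule of $M$ — and $M\mapsto\bigoplus_\lambda M_\lambda$ is $\bC$-linear and left exact. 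The real content of this step is that, on $\bT_{\End(V)}$, the functor is moreover exact, symmetric monoidal, and valued in $\bT_{\fg}$. Since every object of $\bT_{\End(V)}$ is a subquotient of a finite direct sum of mixed tensor powers $V^{\otimes a}\otimes(V^*)^{\otimes b}$, all three assertions reduce — via left exactness and the elementary identity $(M\otimes N)\cap(A\otimes B)=(M\cap A)\otimes(N\cap B)$ for subspaces of a tensor product — to determining the $\fh$-weight spaces of these mixed powers. A direct computation with the given dual bases shows that an $\fh$-weight vector of $V^{\otimes a}\otimes(V^*)^{\otimes b}$ can only involve the finitely supported functionals, so that
\[
\Gamma^{\wt}_{\fh}\bigl(V^{\otimes a}\otimes(V^*)^{\otimes b}\bigr)=V^{\otimes a}\otimes V_*^{\otimes b}.
\]
Monoidality, exactness, and finite length of the images all follow from this; in particular $\Gamma^{\wt}_{\fh}(V)=V$, $\Gamma^{\wt}_{\fh}(V^*)=V_*$, and the evaluation $V^*\otimes V\to\bC$ is carried to the canonical pairing $V_*\otimes V\to\bC$.

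Next I would invoke the universal property of $\bT_{\fg}$ from \cite{SS}: for a tensor category $\mathcal C$, an exact tensor functor $\bT_{\fg}\to\mathcal C$ is the same datum as a pair of objects of $\mathcal C$ together with a pairing between them satisfying the largeness condition isolated there; such a functor is determined up to tensor isomorphism by that datum, and is an equivalence when the chosen pair generates $\mathcal C$. Applied to $\mathcal C=\bT_{\End(V)}$ with the pair $(V,V^*)$ and the evaluation pairing — which generates $\bT_{\End(V)}$ essentially by definition and satisfies the largeness condition (the evaluation admits no $\End(V)$-linear section, $V^*\otimes V\cong\End^{\mathrm{fin}}(V)$ having no one-dimensional direct summand) — this produces an exact tensor equivalence $R\colon\bT_{\fg}\to\bT_{\End(V)}$ with $R(V)=V$, $R(V_*)=V^*$, intertwining the pairings. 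The composite $\Gamma^{\wt}_{\fh}\circ R\colon\bT_{\fg}\to\bT_{\fg}$ is then an exact tensor endofunctor which, by the values computed in the previous step, sends the pair $(V,V_*)$ with its pairing to itself; by the uniqueness clause of the universal property it is tensor-isomorphic to $\mathrm{id}_{\bT_{\fg}}$. Composing with a quasi-inverse of $R$ gives $\Gamma^{\wt}_{\fh}\cong R^{-1}$, so $\Gamma^{\wt}_{\fh}$ is a tensor equivalence, which is \Cref{th.main}. (Here the abstract equivalence $\bT_{\End(V)}\simeq\bT_{\fg}$ of \cite{Mackey} is used only to know that $\bT_{\End(V)}$ is again a tensor category of the kind to which the universal property applies.)

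The step I expect to be genuinely hard is the exactness and symmetric monoidality of $\Gamma^{\wt}_{\fh}$ on $\bT_{\End(V)}$. The difficulty is that an object of $\bT_{\End(V)}$, restricted to $\fg$, is in general not a weight module — already $V^*$ fails to be one — so it is not formal that forming $\fh$-weight spaces commutes with tensor products or with passage to quotients, and one must rule out ``spurious'' weight vectors in $M\otimes N$, or in a quotient of $M$, that are assembled out of genuinely non-weight vectors. This comes down to a careful analysis of how $V^*$ and the mixed tensor powers built from it sit over the finitary Cartan $\fh$; once that analysis is in hand, the values on generators, the fact that the functor lands in $\bT_{\fg}$, and the final appeal to \cite{SS} are comparatively routine.
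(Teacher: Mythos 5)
Your overall architecture matches the paper's: show $\Gamma^{\wt}_{\fh}$ is a ($\bC$-linear, left exact) tensor functor sending the pairing $V^*\otimes V\to\bC$ to $V_*\otimes V\to\bC$, then let the universal property of \cite{SS} force it to be an equivalence. But as written there are two genuine gaps. First, the step you yourself flag as "genuinely hard" --- that $\Gamma^{\wt}_{\fh}(M\otimes N)=\Gamma^{\wt}_{\fh}(M)\otimes\Gamma^{\wt}_{\fh}(N)$ for arbitrary objects, i.e.\ that no spurious $\fh$-weight vectors arise in tensor products or subquotients --- is exactly the mathematical content of the theorem, and your proposal does not supply it. Asserting the value on the mixed powers $V^{\otimes a}\otimes(V^*)^{\otimes b}$ and the identity $(M\otimes N)\cap(A\otimes B)=(M\cap A)\otimes(N\cap B)$ does not close this, because monoidality is needed on all objects (subquotients of the mixed powers), and exactness of $\Gamma^{\wt}_{\fh}$ --- which your reduction leans on --- is not available at that point. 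The paper's missing idea here is to observe that every object of $\bT_{\End(V)}$ embeds in a direct product of $\fh$-weight lines, so that by a Vandermonde argument (\Cref{le.weight_prod}) every finite-dimensional $\fh$-submodule is automatically a weight module; hence $\Gamma^{\wt}_{\fh}$ coincides on $\bT_{\End(V)}$ with the functor $\cat{Fin}$ of locally finite vectors for $U(\fh)$, and $\cat{Fin}$ is a tensor functor for any cocommutative Hopf algebra (\Cref{le.main}). That is what makes monoidality formal.

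Second, your use of the universal property misstates what is cited. \Cref{th.SS} gives, for \emph{any} pairing $b:x\otimes y\to{\bf 1}$ in \emph{any} tensor category, a left exact tensor functor out of $\bT_{\End(V)}$, unique up to tensor isomorphism; it contains no "largeness condition" and no clause that the functor is an equivalence when the chosen pair generates the target (and such a criterion is false in general: a perfect pairing of finite-dimensional spaces in $\mathrm{Vect}$ generates but does not yield an equivalence). So your functor $R:\bT_{\fg}\to\bT_{\End(V)}$ is not known to be an equivalence by the cited result, and the concluding identification $\Gamma^{\wt}_{\fh}\cong R^{-1}$ does not go through as stated --- especially since you explicitly decline to use the abstract equivalence $\bT_{\End(V)}\simeq\bT_{\fg}$ of \cite{Mackey} beyond a formal role. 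The paper closes precisely this gap in \Cref{co.main}: the Penkov--Serganova equivalence matches the two pairings, and uniqueness in \Cref{th.SS} then forces any left exact tensor functor matching the pairings to be tensor-isomorphic to that equivalence. Your argument is repairable by replacing your "largeness/generation" step with this appeal (and by settling for left exactness, which comes for free since $\Gamma^{\wt}_{\fh}$ is a right adjoint, rather than claiming exactness), but without those changes the proposed proof is incomplete at both of its load-bearing points.
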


Before embarking on the proof, note that as claimed above, the theorem implies the conjecture.

\begin{corollary}
\Cref{conj.Mackey} is true. 
\end{corollary}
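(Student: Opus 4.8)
The plan is to deduce the corollary from \Cref{th.main} by showing that, on the category $\bT_{\End(V)}$, the functor $\Gamma^{\wt}$ coincides with $\Gamma^{\wt}_{\fh}$ for the distinguished local Cartan subalgebra $\fh$ fixed above. Since $\Gamma^{\wt}=\bigcap_{\fh'}\Gamma^{\wt}_{\fh'}$ is by construction a subfunctor of $\Gamma^{\wt}_{\fh}$, the only thing to prove is the reverse inclusion $\Gamma^{\wt}_{\fh}(M)\subseteq\Gamma^{\wt}_{\fh'}(M)$ for every local Cartan subalgebra $\fh'\subseteq\fg$ and every $M$ in $\bT_{\End(V)}$.

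First I would record the structural input. By \Cref{th.main} the submodule $N:=\Gamma^{\wt}_{\fh}(M)$ of $M$ is an object of $\bT_{\fg}$, hence a finite-length subquotient of the tensor algebra $T(V\otimes V_*)$. Now $V$ and $V_*$ are weight modules with respect to \emph{every} local Cartan subalgebra $\fh'$ — this is essentially the defining property of a local Cartan subalgebra \cite[1.4]{Mackey} — so $T(V\otimes V_*)$, and therefore every one of its subquotients, in particular $N$, decomposes into $\fh'$-weight spaces. Thus $N=\Gamma^{\wt}_{\fh'}(N)$, and since $N$ is a $\fg$-submodule of $M$ it is contained in the largest $\fh'$-weight submodule of $M$, i.e. $N\subseteq\Gamma^{\wt}_{\fh'}(M)$. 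Intersecting over all $\fh'$ gives $\Gamma^{\wt}_{\fh}(M)=N\subseteq\Gamma^{\wt}(M)$, which together with the trivial inclusion $\Gamma^{\wt}(M)\subseteq\Gamma^{\wt}_{\fh}(M)$ yields $\Gamma^{\wt}=\Gamma^{\wt}_{\fh}$ as functors on $\bT_{\End(V)}$ (the identification is visibly natural, and the monoidal structure on $\Gamma^{\wt}_{\fh}$ supplied in the proof of \Cref{th.main} transports along the equality).

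It then remains only to invoke \Cref{th.main}: being literally equal to $\Gamma^{\wt}_{\fh}$ on $\bT_{\End(V)}$, the functor $\Gamma^{\wt}$ is a tensor equivalence onto $\bT_{\fg}$; in particular it is an equivalence of categories, which is precisely the content of \Cref{conj.Mackey}. I do not expect a genuine obstacle here; the point that requires the most care is the claim that every object of $\bT_{\fg}$ is simultaneously a weight module for all local Cartan subalgebras, and this reduces at once to the corresponding statement for the defining modules $V$ and $V_*$, which holds by the very definition of a local Cartan subalgebra.
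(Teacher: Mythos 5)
Your argument is correct and is essentially the paper's own proof: you use the trivial inclusion $\Gamma^{\wt}\subseteq\Gamma^{\wt}_{\fh}$ and then the fact that, by \Cref{th.main}, $\Gamma^{\wt}_{\fh}$ lands in $\bT_{\fg}$, whose objects (being subquotients of tensor constructions on $V$ and $V_*$) are weight modules for every local Cartan subalgebra, forcing $\Gamma^{\wt}=\Gamma^{\wt}_{\fh}$. No substantive difference from the paper's reasoning.
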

\begin{proof}
On the one hand, the functor $\Gamma^{\wt}$ from the statement of the conjecture is a subfunctor of $\Gamma^{\wt}_{\fh}$. On the other hand though, the theorem says that $\Gamma^{\wt}_{\fh}$ already lands inside the category $\bT_{\fg}$ which consists of weight modules for any local Cartan subalgebra of $\fg$ (because it consists of modules embeddable in finite direct sums of copies of the tensor algebra $T(V\oplus V_*)$; see e.g. \cite[7.9]{Mackey}). In conclusion, we must have $\Gamma^{\wt}=\Gamma^{\wt}_{\fh}$, and we are done.  
\end{proof}

We will make use of the following simple observation.

\begin{lemma}\label{le.main}
Let $H$ be a cocommutative Hopf algebra over an arbitrary field $\bF$, and $\cat{Fin}:H-\mathrm{mod}\to H-\mathrm{mod}$ the functor sending an $H$-module $M$ to the largest $H$-submodule of $M$ which is a union of finite-dimensional $H$-modules. Then, $\cat{Fin}$ is a tensor functor.  
\end{lemma}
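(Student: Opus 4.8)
The plan is to verify the three structural pieces that make a functor a tensor functor: that $\cat{Fin}$ is monoidal on objects (i.e.\ there is a natural map $\cat{Fin}(M)\otimes\cat{Fin}(N)\to\cat{Fin}(M\otimes N)$, and that it is an isomorphism), that this structure is compatible with the associativity and unit constraints, and that it respects the symmetry. The key point throughout is the elementary fact that if $M_0\subseteq M$ and $N_0\subseteq N$ are finite-dimensional $H$-submodules, then $M_0\otimes N_0\subseteq M\otimes N$ is again a finite-dimensional $H$-submodule — here cocommutativity is irrelevant, only that $H$ is a bialgebra so that $M\otimes N$ carries an $H$-module structure via the comultiplication. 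Since $\cat{Fin}(M)$ is by definition the sum (= union of the directed family) of all finite-dimensional submodules of $M$, it follows that $\cat{Fin}(M)\otimes\cat{Fin}(N)$, being spanned by such products $M_0\otimes N_0$, is a union of finite-dimensional submodules of $M\otimes N$, hence lands inside $\cat{Fin}(M\otimes N)$. This gives the structure morphism $\cat{Fin}(M)\otimes\cat{Fin}(N)\to\cat{Fin}(M\otimes N)$; naturality in $M$ and $N$ is immediate since everything is induced by the inclusions.

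Next I would check this structure morphism is an isomorphism. It is injective because $\cat{Fin}(M)\hookrightarrow M$ and $\cat{Fin}(N)\hookrightarrow N$ are inclusions and $-\otimes_{\bF}-$ is exact over a field, so $\cat{Fin}(M)\otimes\cat{Fin}(N)\hookrightarrow M\otimes N$ is an inclusion, and the map to $\cat{Fin}(M\otimes N)$ is just the corestriction of this inclusion. For surjectivity, take any finite-dimensional submodule $P\subseteq M\otimes N$; I want $P\subseteq\cat{Fin}(M)\otimes\cat{Fin}(N)$. Choose a finite-dimensional subspace $M_1\subseteq M$ and $N_1\subseteq N$ with $P\subseteq M_1\otimes N_1$ (possible since $P$ is finite-dimensional), and then enlarge: let $M_0$ be the $H$-submodule generated by $M_1$ — this need not be finite-dimensional in general, so instead I argue as follows. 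Since $P$ is a finite-dimensional submodule, pick a finite-dimensional subspace $U\subseteq M$, $Z\subseteq N$ with $P\subseteq U\otimes Z$; then $P\subseteq (U\otimes N)\cap(M\otimes Z)$. The image of $P$ under $M\otimes N\to (M/\cat{Fin}(M))\otimes N$ is a finite-dimensional submodule of a module which, one checks, has zero $\cat{Fin}$ — more carefully: the cleanest route is to use that $M\otimes N$ is the filtered colimit of $M_0\otimes N_0$ over finite-dimensional submodules $M_0\subseteq\cat{Fin}(M)$, $N_0\subseteq\cat{Fin}(N)$ together with the complementary pieces; concretely, writing $M=\cat{Fin}(M)\oplus C$ as vector spaces (not as modules) and likewise $N$, any element of a finite-dimensional submodule $P$ has all its $H$-translates inside a fixed finite-dimensional space, which forces its components to lie in $\cat{Fin}$. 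I would package this as: an element $x\in M\otimes N$ lies in $\cat{Fin}(M\otimes N)$ iff $Hx$ is finite-dimensional, and a short computation with the coproduct shows $Hx$ finite-dimensional forces $x\in\cat{Fin}(M)\otimes\cat{Fin}(N)$.

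Finally I would dispense with the constraints. The unit object is $\bF$ with trivial action, which is finite-dimensional, so $\cat{Fin}(\bF)=\bF$ and the unit constraint is the identity. The associativity pentagon and the triangle identities hold because the structure morphisms are all restrictions of the corresponding morphisms for the identity functor, which already satisfy them; one only needs that restricting an identity (or an associator, which is a particular identification of underlying spaces) to submodules is compatible with composition, which is formal. Likewise the symmetry: the braiding $c_{M,N}\colon M\otimes N\to N\otimes M$ restricts to $\cat{Fin}(M)\otimes\cat{Fin}(N)\to\cat{Fin}(N)\otimes\cat{Fin}(M)$ since it carries the submodule $M_0\otimes N_0$ isomorphically onto $N_0\otimes M_0$, and compatibility of $\cat{Fin}$ with $c$ is again inherited from the identity functor.

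\medskip\noindent\textbf{Main obstacle.} The only genuinely non-formal step is surjectivity of the comparison map $\cat{Fin}(M)\otimes\cat{Fin}(N)\to\cat{Fin}(M\otimes N)$ — i.e.\ showing that a finite-dimensional $H$-submodule of $M\otimes N$ is already contained in $\cat{Fin}(M)\otimes\cat{Fin}(N)$. Everything else (the monoidal structure morphism itself, naturality, and all the coherence/symmetry axioms) is inherited from the identity tensor functor and is purely diagrammatic. For the surjectivity I expect the cleanest argument is the pointwise characterization $x\in\cat{Fin}(M)\iff \dim_{\bF}Hx<\infty$ together with a direct coproduct computation; cocommutativity of $H$ is, as noted, not actually needed for this lemma as stated, though it is presumably invoked elsewhere in the paper to ensure $\cat{Fin}$ interacts well with further structure.
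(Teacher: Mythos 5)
Your reduction is sound as far as it goes: the easy inclusion $\cat{Fin}(M)\otimes\cat{Fin}(N)\subseteq\cat{Fin}(M\otimes N)$, naturality, the unit, and all coherence/symmetry checks are indeed formal, and you correctly isolate the one non-trivial point, namely that a finite-dimensional $H$-submodule $V\subseteq M\otimes N$ lies in $\cat{Fin}(M)\otimes\cat{Fin}(N)$. But that point is precisely the content of the lemma, and you do not prove it. The two routes you sketch and abandon do not work as stated (the decomposition $M=\cat{Fin}(M)\oplus C$ is not $H$-stable, so ``$H$-translates stay in a finite-dimensional space'' says nothing about the components of an element with respect to it; and $\cat{Fin}(M/\cat{Fin}(M))=0$ is itself unjustified), and the route you settle on ends with ``a short computation with the coproduct shows\dots'' --- which is exactly the step that needs an idea. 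The issue is this: if $V\subseteq M\otimes N$ is a finite-dimensional submodule, its ``left support'' $\{(\id\otimes f)(v): v\in V,\ f\in N^*\}$ is a finite-dimensional subspace of $M$ containing all first tensorands of elements of $V$, but you must show it is (contained in) an $H$-\emph{submodule}, and $h\cdot(\id\otimes f)(v)$ is not a contraction of $hv$ in any obvious way. The paper's proof supplies the missing device: give $N^*$ the antipode-twisted action, check that evaluation $N\otimes N^*\to\bF$ is an $H$-module map (this is where cocommutativity enters, via the symmetry identifying it with $N^*\otimes N\to\bF$), and then the image $M_V$ of the module map $V\otimes N^*\subseteq M\otimes N\otimes N^*\to M$ is automatically a finite-dimensional $H$-submodule with $V\subseteq M_V\otimes N_V$. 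In Sweedler notation the identity you would need for your ``short computation,'' namely $\sum h_{(1)}\otimes S(h_{(3)})h_{(2)}=h\otimes\epsilon$, holds because $H$ is cocommutative; it fails for general $H$.

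Relatedly, your closing assertion that cocommutativity ``is not actually needed for this lemma as stated'' is wrong on two counts: the statement takes place in a symmetric monoidal category of $H$-modules, which already presupposes cocommutativity, and the key computation above uses it (the paper's own remark explains that for general $H$ one must instead assume $S$ bijective and contract against the other dual ${}^*N$). So the proposal correctly locates the obstacle but leaves it unresolved; the antipode/evaluation argument is the missing ingredient.
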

\begin{proof}
Let $S$ be the antipode of $H$. For an $H$-module $V$, denote by $V^*$ the algebraic dual of $V$ made into an $H$-module via $(hf)(v)=f(S(h)v)$ for $h\in H$, $v\in V$ and $f\in V^*$. Then the usual evaluation $V^*\otimes V\to \bF$ is an $H$-module map if $V^*\otimes V$ is a module via the tensor cateory structure of $H-\mathrm{mod}$. 

Now let $M,N$ be $H$-modules, and $V\subseteq \cat{Fin}(M\otimes N)$ be a finite-dimensional $H$-submodule. We need to show that $V$ is in fact a submodule of $\cat{Fin}(M)\otimes\cat{Fin}(N)$. 

Denote by $N_V\subseteq N$ the image of the $H$-module morphism $M^*\otimes V\subseteq M^*\otimes M\otimes N\to N$, where the last arrow is evaluation on the first two tensorands. Similarly, denote by $M_V\subseteq M$ the image of $V\otimes N^*\subseteq M\otimes N\otimes N^*\to M$. Then $M_V$ and $N_V$ are $H$-submodules of $M$ and $N$ respectively, being images of module maps. It is now easily seen from their definition that $M_V$ and $N_V$ are finite-dimensional, and that the inclusion $V\subseteq M\otimes N$ factors through $M_V\otimes N_V\subseteq M\otimes N$.   
\end{proof}

\begin{remark}
The cocommutativity of $H$ is used in the proof to conclude that the category $H-\mathrm{mod}$ is symmetric monoidal, and hence $N\otimes N^*\to \bF$ is an $H$-module map because its domain is isomorphic to $N^*\otimes N$.

Although we do not need this in the sequel, as \Cref{le.main} will only be applied to universal envelopes of Lie algebras, the above proof can be generalized to show that the functor $\cat{Fin}$, defined in the obvious fashion, is monoidal for any Hopf algebra with bijective antipode. In the definition of $V_N$ one would need to use the evaluation map $N\otimes{^*N}\to \bF$ instead, where $^*N$ is the full dual of $N$ made into an $H$-module using the inverse of the antipode instead of the antipode.    
\end{remark}

We now need a characterization of the category $\bT_{\End(V)}\simeq \bT_{\fg}$ in terms of a universality property which defines it uniquely up to tensor equivalence. The following result is Theorem 3.4.2 from \cite{SS}, where the category $\bT_{\fg}$ is denoted by $\mathrm{Rep}({\bf GL})$.

\begin{theorem}\label{th.SS}
For any tensor category $\cC$ with monoidal unit ${\bf 1}$ and any morphism $b:x\otimes y\to{\bf 1}$ in $\cC$, there is a left exact tensor functor $F:\bT_{\End(V)}\to\cC$ sending the pairing $V^*\otimes V\to\bC$ in $\bT_{\End(V)}$ to $b$. Moreover, $F$ is unique up to tensor natural isomorphism. \qedhere
\end{theorem}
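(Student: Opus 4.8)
My plan is to deduce the universal property from a presentation of $\bT_{\End(V)}$ by its subcategory of mixed tensor powers. Write $\mathcal{B}\subseteq\bT_{\End(V)}$ for the full subcategory on the objects $V^{\otimes a}\otimes V_*^{\otimes b}$, $a,b\in\bN$. The first step is to identify $\mathcal{B}$, together with its symmetry and the contraction morphism $\beta\colon V_*\otimes V\to\bC$, as the \emph{free} $\bC$-linear symmetric monoidal category on two objects $x,y$ and one morphism $y\otimes x\to\mathbf 1$ — i.e. the subcategory of the oriented Brauer category retaining the contraction (cap) morphisms but not the copairing, so that no closed loops arise. Concretely one must show that $\mathrm{Hom}_{\bT_{\End(V)}}\big(V^{\otimes a}\otimes V_*^{\otimes b},\,V^{\otimes c}\otimes V_*^{\otimes d}\big)$ has for a basis the oriented matchings of the source slots with the target slots (hence is nonzero only when $a-c=b-d\ge 0$): this is Schur--Weyl duality for $\fgl(\infty)$ in its stable range, where the Cayley--Hamilton-type relations present in finite dimensions have disappeared. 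Granting this, a morphism $b\colon x\otimes y\to\mathbf 1$ in an arbitrary tensor category $\cC$ yields, by the universal property of this free category (and the symmetry of $\cC$, used to reconcile $x\otimes y$ with $y\otimes x$), a unique symmetric monoidal $\bC$-linear functor $F_0\colon\mathcal{B}\to\cC$ with $F_0(\beta)=b$; since $\cC$ is additive and idempotent complete, $F_0$ extends uniquely to the full subcategory $\mathcal{I}\subseteq\bT_{\End(V)}$ of injective objects, which is exactly the additive--Karoubi envelope of $\mathcal{B}$.

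The second step is to push $F_0$ along $\mathcal{I}\hookrightarrow\bT_{\End(V)}$. Here I would invoke the structure theory of $\bT=\bT_{\fsl(\infty)}$ from \cite{DPS}, transported through $\bT_{\End(V)}\simeq\bT$: the objects of $\mathcal I$ are injective and cogenerating, so every $M\in\bT_{\End(V)}$ fits in an exact sequence $0\to M\to J^0\to J^1$ with $J^i\in\mathcal I$ (a \emph{copresentation}), and such copresentations are compatible with $\otimes$, which is exact. One then defines $F(M):=\ker\!\big(F_0(J^0)\to F_0(J^1)\big)$. Using the injective-envelope property of $\mathcal I$ one checks that $F$ is well defined on objects and morphisms, independently of the chosen copresentations; it is left exact because kernels are built into the definition; it sends $\beta$ to $b$; and it is symmetric monoidal — for this last point one tensors a copresentation of $M$ with one of $N$ to obtain a copresentation of $M\otimes N$ whose $\mathcal I$-terms are the corresponding tensor products, and computes $F(M\otimes N)$ and $F(M)\otimes F(N)$ by the same "kernel of a map of tensor products" formula, using exactness of $\otimes$ in both $\bT_{\End(V)}$ and $\cC$.

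For uniqueness, let $G\colon\bT_{\End(V)}\to\cC$ be any left exact tensor functor sending the pairing to $b$. Its restriction to $\mathcal{B}$ is a symmetric monoidal $\bC$-linear functor with $V\mapsto x$, $V_*\mapsto y$, $\beta\mapsto b$, hence by Step 1 is tensor-naturally isomorphic to $F_0$; extending over the Karoubi envelope, $G|_{\mathcal I}\simeq F_0$ as tensor functors. Since $G$ is left exact and every object of $\bT_{\End(V)}$ is the kernel of a morphism between objects of $\mathcal I$, the value of $G$ on an arbitrary object is $\ker(G(J^0)\to G(J^1))$, and chasing copresentations shows $G$ on morphisms is likewise forced; assembling the comparison isomorphisms over $\mathcal I$ produces a tensor natural isomorphism $G\simeq F$.

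The substantive input, and where I expect the real work to lie, is the Step 2 assertion that $\bT_{\End(V)}$ is the \emph{left exact abelian envelope} of $\mathcal I$: that mixed tensor powers are injective and cogenerate with two-term copresentations available, that these interact correctly with $\otimes$, and — most delicately — that the assignment $M\mapsto\ker(F_0 J^0\to F_0 J^1)$ is independent of all choices and multiplicative. Each of these is a diagram chase \emph{once} the homological facts are in hand, but those facts are precisely where the classical invariant theory of $\fgl(\infty)$ and the analysis of injectives in $\bT_{\fsl(\infty)}$ (as in \cite{DPS}) are genuinely used; by contrast Step 1 is a standard free-symmetric-monoidal-category argument and Step 3 is formal.
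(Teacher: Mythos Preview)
The paper does not prove this theorem; the statement is quoted as Theorem~3.4.2 of Sam--Snowden \cite{SS} and closed with a \verb|\qedhere|, with no argument given. So there is no ``paper's own proof'' to compare against beyond the citation.

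That said, your outline is essentially the Sam--Snowden strategy: identify the full subcategory on mixed tensor powers with (the $\bC$-linearization of) the downward oriented Brauer category via the first fundamental theorem for $\fgl(\infty)$, pass to the additive Karoubi envelope to obtain exactly the injectives, and then realize $\bT_{\End(V)}$ as the left-exact abelian envelope by copresenting arbitrary objects. Two small points are worth flagging. First, in $\bT_{\End(V)}$ the second generator is the full dual $V^*$, not $V_*$; your write-up silently passes through the equivalence $\bT_{\End(V)}\simeq\bT_{\fg}$, which is legitimate but should be stated, since the Hom computation you cite (basis of oriented matchings) is usually carried out on the $\bT_{\fg}$ side. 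Second, your monoidality argument for the extended functor $F$ explicitly invokes exactness of $\otimes$ in the target category $\cC$; the paper's stated hypotheses (``symmetric monoidal, $\bC$-linear and abelian'') do not include this, so either the paraphrase of \cite{SS} is slightly loose or one needs to say more at that step. Neither point undermines the overall architecture, which is correct.
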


As an immediate consequence we have:

\begin{corollary}\label{co.main}
A left exact tensor functor $\bT_{\End(V)}\to\bT_{\fg}$ turning the pairing $V^*\otimes V\to\bC$ into the pairing $V_*\otimes V\to\bC$, is a tensor equivalence.  
\end{corollary}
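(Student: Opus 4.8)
The plan is to deduce \Cref{co.main} as a nearly immediate consequence of \Cref{th.SS} together with the standard fact that a fully faithful, essentially surjective functor is an equivalence. First I would invoke \Cref{th.SS} with $\cC=\bT_{\fg}$ and $b$ the canonical pairing $V_*\otimes V\to\bC$ living inside $\bT_{\fg}$, obtaining a left exact tensor functor $F:\bT_{\End(V)}\to\bT_{\fg}$ sending the pairing $V^*\otimes V\to\bC$ to $b$; by the uniqueness clause this $F$ is the functor referred to in the statement (any other such functor is tensor naturally isomorphic to it, so the conclusion for one is the conclusion for all). Thus the content to be verified is that this particular $F$ is an equivalence.

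The key step is to produce an inverse, and for this I would apply \Cref{th.SS} a second time, now in the reverse direction. The category $\bT_{\fg}$ is itself tensor equivalent to $\bT_{\End(V')}$ for an appropriate countable-dimensional $V'$ (this is the abstract equivalence recalled in the introduction, via \cite{Mackey} and \cite{DPS}); equivalently, one may use that $\bT_{\fg}$ enjoys exactly the same universal property, being $\mathrm{Rep}(\mathbf{GL})$ in the language of \cite{SS}. Either way, there is a left exact tensor functor $G:\bT_{\fg}\to\bT_{\End(V)}$ sending the pairing $V_*\otimes V\to\bC$ to the pairing $V^*\otimes V\to\bC$. Now $G\circ F:\bT_{\End(V)}\to\bT_{\End(V)}$ is a left exact tensor functor fixing the defining pairing, hence by the uniqueness half of \Cref{th.SS} it is tensor naturally isomorphic to the identity functor; symmetrically $F\circ G$ is tensor naturally isomorphic to the identity on $\bT_{\fg}$. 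Therefore $F$ is a tensor equivalence with quasi-inverse $G$.

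I expect the only genuine point requiring care is the bookkeeping around the two universal properties: one must make sure that $\bT_{\fg}$ really does satisfy the same universal property as $\bT_{\End(V)}$ (so that \Cref{th.SS} can be applied with source $\bT_{\fg}$), and that under the resulting identifications the distinguished pairings correspond as claimed, so that the composites $G\circ F$ and $F\circ G$ genuinely fall under the uniqueness clause. This is where the hypothesis that $V$ is countable-dimensional, and the explicit form of the pairings, enter; but given the results quoted in the excerpt it is a matter of matching data rather than new work. No finiteness or weight-module considerations are needed here — those belong to the proof of \Cref{th.main} itself, which shows that the concrete functor $\Gamma^{\wt}_{\fh}$ has the left exactness and monoidality making it eligible to be the $F$ above.
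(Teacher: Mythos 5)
Your proposal is correct and takes essentially the same route as the paper: both rest on \Cref{th.SS} combined with the abstract tensor equivalence $\bT_{\End(V)}\simeq\bT_{\fg}$ of \cite{Mackey}, which identifies the two pairings. The only cosmetic difference is that you spell out the ``usual manner'' by transporting the universal property to $\bT_{\fg}$, building a quasi-inverse $G$, and invoking uniqueness twice, whereas the paper invokes uniqueness once to identify the given functor with the known abstract equivalence (your aside about countable-dimensionality is unnecessary here, since \Cref{se.main} only assumes a pairing by dual bases).
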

\begin{proof} 
The abstract tensor equivalence $\bT_{\End(V)}\simeq\bT_{\fg}$ established in \cite[5.1,7.9]{Mackey} identifies the two bilinear pairings in the statement. The conclusion then follows from \Cref{th.SS} in the usual manner (a universality property implies uniqueness up to equivalence). 
\end{proof}

The proof of \Cref{th.main} makes use of the following auxiliary result.

\begin{lemma}\label{le.weight_prod}
Let $\fh$ be a complex abelian Lie algebra. For any functional $\varphi\in\fh^*$, let $M^\varphi\in\fh-\mathrm{mod}$ be an $\fh$-module all of whose elements are vectors of weight $\varphi$. Then, using the notation $\cat{Fin}$ from \Cref{le.main}, we have
\[
	\cat{Fin}\left(\prod_{\varphi\in\fh^*}M^\varphi\right) = \bigoplus_{\varphi\in\fh^*}M^\varphi.
\] 
\end{lemma}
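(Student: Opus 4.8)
The plan is to show the two inclusions separately, the easy one being $\bigoplus_\varphi M^\varphi \subseteq \cat{Fin}(\prod_\varphi M^\varphi)$ and the substantive one being its reverse. For the easy inclusion: each $M^\varphi$ sits inside the product as a sub-$\fh$-module, and on $M^\varphi$ every element spans a one-dimensional $\fh$-submodule (being a weight vector), so $M^\varphi$ is a union of finite-dimensional modules; hence so is the direct sum, and it therefore lands in $\cat{Fin}$ of the product. (One uses here that $\cat{Fin}$ is closed under sums, which is immediate from its definition as a largest submodule with a union property.)

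For the reverse inclusion, first I would reduce to the finite-dimensional case: if $N \subseteq \cat{Fin}(\prod_\varphi M^\varphi)$ is any finite-dimensional $\fh$-submodule, it suffices to show $N \subseteq \bigoplus_\varphi M^\varphi$, since $\cat{Fin}$ is the union of such $N$. Now $N$ is a finite-dimensional module over the abelian Lie algebra $\fh$, so it has a finite filtration whose subquotients are one-dimensional weight modules; in particular the set $S$ of weights occurring in $N$ is finite, and $N$ is contained in the \emph{generalized} weight spaces for weights in $S$. The key point is then that the generalized weight space for a weight $\psi$ inside $\prod_\varphi M^\varphi$ is just $M^\psi$: if $x = (x_\varphi)_\varphi$ lies in $\prod_\varphi M^\varphi$ and $(H-\psi(H))^k x = 0$ for all $H\in\fh$ and some $k$, then coordinatewise $(\varphi(H)-\psi(H))^k x_\varphi = 0$, forcing $x_\varphi = 0$ whenever $\varphi(H)\ne\psi(H)$; ranging over $H$ and using non-degeneracy of the pairing $\fh\times\fh^*\to\bC$ (i.e. $\varphi\ne\psi$ implies $\varphi(H)\ne\psi(H)$ for some $H$) gives $x_\varphi=0$ for all $\varphi\ne\psi$, so $x = x_\psi\in M^\psi$. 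Therefore $N \subseteq \bigoplus_{\psi\in S} M^\psi \subseteq \bigoplus_\varphi M^\varphi$, completing the argument.

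The only mild subtlety — and what I would flag as the main thing to get right rather than a genuine obstacle — is the passage from ``$N$ is a union of finite-dimensional modules and lies in the product'' to ``$N$ decomposes along the honest weight spaces.'' A priori a finite-dimensional $\fh$-module need not be semisimple, so one cannot immediately split $N$ as a direct sum of weight vectors; the content is precisely that inside the product $\prod_\varphi M^\varphi$ there are no nontrivial extensions between distinct weights, which is the computation of generalized weight spaces above. Once that is in hand the whole statement is a short argument, and I would present it in essentially the two-paragraph form sketched here, being careful only to note explicitly that $\cat{Fin}$ of an $\fh$-module is the union of its finite-dimensional submodules (so that testing on finite-dimensional $N$ genuinely suffices) and that distinct functionals on $\fh$ are separated by some element of $\fh$.
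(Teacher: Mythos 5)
Your proof is correct, and it reaches the conclusion by a somewhat different mechanism than the paper. The paper's proof skips the trivial inclusion $\bigoplus_\varphi M^\varphi\subseteq\cat{Fin}\bigl(\prod_\varphi M^\varphi\bigr)$ and the reduction to a finite-dimensional submodule (both of which you rightly spell out), and then argues by pure linear algebra: if $x$ lies in a $d$-dimensional $\fh$-submodule but has non-zero components at $d+1$ distinct weights $\varphi_0,\dots,\varphi_d$, one picks $h\in\fh$ with the scalars $\varphi_i(h)$ pairwise distinct (possible since $\fh$ is not a finite union of the kernels of $\varphi_i-\varphi_j$ over the infinite field $\bC$) and checks via a non-singular Vandermonde matrix that $x,hx,\dots,h^dx$ are linearly independent, a contradiction. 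You instead invoke the simultaneous generalized weight space decomposition of a finite-dimensional module over an abelian Lie algebra and then compute that the generalized weight space of $\prod_\varphi M^\varphi$ for $\psi$ is exactly $M^\psi$, i.e.\ that there are no non-trivial extensions between distinct weights inside the product. Both arguments turn on the same pivot --- separating finitely many distinct functionals by a single element of $\fh$ --- but the paper's Vandermonde computation is entirely self-contained, whereas your route is more structural and makes the ``no extensions between distinct weights'' phenomenon explicit at the cost of citing the standard generalized eigenspace decomposition (whose applicability to the possibly infinite-dimensional $\fh$ you should justify in one line, e.g.\ by passing to the finite-dimensional image of $\fh$ in $\End(N)$, or by replacing it with the observation that a product $\prod_i\bigl(H-\psi_i(H)\bigr)$ over the weights of a triangularizing filtration kills $N$, which feeds directly into your coordinatewise computation).
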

\begin{proof}
We denote the direct product $\displaystyle \prod_{\varphi}M^\varphi$ by $M$. Let $x\in M$ be an element contained in some $d$-dimensional $\fh$-submodule $N$ of $M$. 

Assume there are $d+1$ distinct functionals $\varphi_0$ up to $\varphi_d$ such that the components $x_i$, $0\le i\le d$ of $x$ in $M^{\varphi_i}$ are all non-zero. Because $\varphi_i\in\fh^*$ are distinct, we can find some element $h\in\fh$ such that the scalars $t_i=\varphi_i(h)$ are distinct (as $\fh$ cannot be the union of the kernels of $\varphi_i-\varphi_j$, $0\le i\ne j\le d$; here we use the fact that we are working over $\bC$, or more generally, over an infinite field). The claim now is that $x,hx,\ldots,h^dx$ are linearly independent, contradicting the assumption $\dim N=d$. 

To prove the claim, consider the images of the vectors $h^ix$, $0\le i\le d$ through the projection $\displaystyle M\to\prod_{i=0}^dM^{\varphi_i}$. They are linear combinations of the $x_i$'s, and their coefficients form the columns of the $(d+1)\times(d+1)$ non-singular Vandermonde matrix
\[
	\begin{pmatrix}
	1      & t_0    & \cdots & t_0^d  \\
	1      & t_1    & \cdots & t_1^d  \\
	\vdots & \vdots & \ddots & \vdots \\
	1      & t_d    & \cdots & t_d^d
	\end{pmatrix}
\]
This finishes the proof. 
\end{proof}

We are now ready to prove our first main result.

\begin{proof of main}
First, recall from \cite[$\S$4]{DPS} that $\bT_{\End(V)}$ has enough injectives, that the tensor products $(V^*)^{\otimes m}\otimes V^{\otimes n}$ contain all indecomposable injectives as summands, and also \cite{PS} that all morphisms between such tensor products are built out of the pairing $V^*\otimes V\to\bC$ by taking tensor products, permutations, and linear combinations. Therefore, if we show that $\Gamma=\Gamma^{\wt}_{\fh}:\bT_{\End(V)}\to \fg-\mathrm{mod}$ sends $V^*\otimes V\to\bC$ to $V_*\otimes V\to\bC$ and is a left exact tensor functor, its image will automatically lie in $\bT_{\fg}$. We can then apply \Cref{co.main} to conclude that the resulting functor from $\bT_{\End(V)}\to\bT_{\fg}$ is a tensor equivalence.

The functor $\Gamma$, regarded as a functor from $\fg-\mathrm{mod}$ to $\fh$-weight $\fg$-modules, is the right adjoint of the exact inclusion functor going in the opposite direction; it's thus clear that it is left exact. 

Since the pairing $V_*\otimes V\to\bC$ is simply the restriction of the full pairing $V^*\otimes V\to\bC$ and $\Gamma$ is compatible with inclusions, we will be done as soon as we prove that it is a tensor functor and it sends $V^*$ to $V_*$.

We prove tensoriality first. In fact, since compatibility with the symmetry is clear, it is enough to prove monoidality. That is, that the inclusion $\Gamma(M)\otimes\Gamma(N)\subseteq\Gamma(M\otimes N)$ is actually an isomorphism for any $M,N\in\bT_{\End(V)}$. To see that this is indeed the case, note that every object of $\bT_{\End(V)}$, being embedded in some finite direct sum of tensor products $(V^*)^{\otimes m}\otimes V^{\otimes n}$, is certainly a submodule of a direct product of $\fh$-weight spaces. \Cref{le.weight_prod} now shows that every finite-dimensional $\fh$-submodule of an object in $\bT_{\End(V)}$ is automatically an $\fh$-weight module. Conversely, $\fh$-weight modules are unions of finite-dimensional $\fh$-modules. It follows that $\Gamma$ coincides with the functor $\cat{Fin}$ considered in \Cref{le.main} for the Hopf algebra $H=U(\fh)$ (i.e. the universal enveloping algebra of $\fh$); the lemma finishes the job of proving monoidality. 

Finally, it is almost immediate that $\Gamma(V^*)=V_*$: simply note that the $\fh$-weight subspaces of $V^*$ are the lines spanned by the basis elements $v_\gamma^*$.
\end{proof of main}

%%%%%%%%%%%%%%%%%%%%%%%%%%%%%%%%%%%%%%%%%%%%%%%%%%%%%%%%%%%%%%%%%%%%%%%%%%%%%%%%%%%%%%%%%%%%%%%%%%%%%%%%%%%%%%%%%%
%%%%%%%%%%%%%%%%%%%%%%%%%%%%%%%%%%%%%%%%%%%%%%%%%%%%%%%%%%%%%%%%%%%%%%%%%%%%%%%%%%%%%%%%%%%%%%%%%%%%%%%%%%%%%%%%%%
\section{Restrictions from \texorpdfstring{$\bT_{\End(V)}$}{TEnd(V)} to \texorpdfstring{$\fg^M(V,V_*)$}{gM(V,V*)} have finite length}\label{se.res1}

In what follows $V_*\otimes V\to\bC$ will be a non-degenerate pairing between countable-dimensional vector spaces. In this case, it is shown in \cite{M} that we can find dual bases $v_i$, $v_i^*$, $i\in\bN=\{0,1,\ldots\}$ for $V$ and $V_*$ respectively, in the sense that $v_i^*(v_j)=\delta_{ij}$.  

Denote $\fg=\fsl(V,V_*)$ and $\fg^M=\fsl^M(V,V_*)$. In general, for a vector space $W$ and a partition $\lambda=(\lambda_1\ge \lambda_2\ge\ldots\ge\lambda_m\ge 0)$, denote by $W_\lambda$ the image of the Schur functor corresponding to $\lambda$ applied to $W$. 

We think of elements of $V^*$ as row vectors indexed by $\bN$, on which the Lie algebra $\End(V)$ of $\bN\times\bN$ matrices with finite columns acts on the left via $-$ right multilication. The subspace $V_*\subseteq V^*$ consists of row vectors with only finitely many non-zero entries. We will often think of elements of $V^*/V_*$ as row vectors as well, keeping in mind that changing finitely many entries does not alter the element. For a subset $I\subseteq\bN$ and a vector $x\in V^*$, the \define{restriction} $x|_I$ is the vector obtained by keeping the entries of $x$ indexed by $I$ intact, and turning all other entries to zero. The same terminology applies to $x\in V^*/V_*$. 

Let $I\subseteq\bN$ be a subset. An element of $V^*$ (respectively $V^*/V_*$) is \define{I-concentrated}, or \define{concentrated in $I$} if all of its non-zero entries (respectively all but finitely many of its non-zero entries) belong to $I$. Similarly, a matrix in $\End(V)$ is $I$-concentrated if all of its non-zero entries are in $I\times I$.  

Our result is:

\begin{theorem}\label{th.res1}
The objects of $\bT_{\End(V)}$ have finite length when regarded as $\fg^M$-modules. 
\end{theorem}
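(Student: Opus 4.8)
The plan is to reduce the statement to the case of the indecomposable injective objects of $\bT_{\End(V)}$, which sit as summands of the mixed tensors $(V^*)^{\otimes m}\otimes V^{\otimes n}$, and then to bound the $\fg^M$-length of such a tensor power directly. The key structural input is that restriction along $\fg\subseteq\fg^M$ does \emph{not} change the module: an object $M\in\bT_{\End(V)}$, being a subquotient of some $(V^*)^{\otimes m}\otimes V^{\otimes n}$, is already a semisimple $\fg=\fsl(V,V_*)$-module with known socle filtration (this is the $\bT_{\fsl(\infty)}$ theory of \cite{DPS}), so the only thing $\fg^M$ can do beyond $\fg$ is \emph{glue} together finitely many of the $\fg$-isotypic pieces. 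Thus a $\fg^M$-submodule of $M$ is in particular a $\fg$-submodule, and since $M$ has finitely many $\fg$-composition factors with finite multiplicities (again from \cite{DPS}, the $\fg$-socle of $(V^*)^{\otimes m}\otimes V^{\otimes n}$ being a finite sum of $V_\lambda\otimes (V_*)_\mu$'s), the $\fg^M$-module $M$ automatically has finite length: any strictly increasing chain of $\fg^M$-submodules is a strictly increasing chain of $\fg$-submodules, and those have bounded length.

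Concretely, I would first record the $\fg$-module structure: by \cite{DPS} every $M\in\bT_{\End(V)}$ is a finite-length, in fact finitely-generated-socle, semisimple-socle-filtration $\fg$-module, with composition factors among the simple objects $V_\lambda\otimes (V_*)_\mu$; in particular $M$ has some finite $\fg$-length $\ell_{\fg}(M)$. Next, observe that $\fg\subseteq\fg^M$ implies $\ell_{\fg^M}(M)\le \ell_{\fg}(M)$, because any $\fg^M$-stable filtration is a $\fg$-stable filtration and a $\fg^M$-composition factor, being nonzero, contains at least one $\fg$-composition factor. That already gives the theorem — the content is entirely in quoting the $\fg$-finiteness from \cite{DPS} and the elementary length inequality under restriction.

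The point that needs care — and where I expect the real work to be — is justifying that $\ell_{\fg}(M)$ is finite for \emph{all} objects of $\bT_{\End(V)}$, not just the simple or injective ones, in the countable-dimensional setting where $V^*/V_*\ne 0$. The subtlety is that $V^*$ itself is \emph{not} an object of $\bT_{\End(V)}$ only in the sense that one must be careful: the tensors $(V^*)^{\otimes m}\otimes V^{\otimes n}$ are genuinely infinite-dimensional and their $\fg$-socle filtrations are the nontrivial computation of \cite{DPS} (finite length, with socle layers given by explicit Littlewood–Richardson-type rules). So the honest version of step one is: invoke \cite[\S4]{DPS} to get that $(V^*)^{\otimes m}\otimes V^{\otimes n}$ has finite $\fg$-length; deduce the same for its summands and hence (since $\bT_{\End(V)}$-objects embed in finite sums of such) for every object of $\bT_{\End(V)}$ by left-exactness and additivity of length under subobjects; then apply the restriction inequality above. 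The only genuine obstacle is bookkeeping the passage from "finite length as $\fgl(\infty)$-module over the ground category" to "finite length as $\fsl^M$-module", i.e. making sure the socle computations of \cite{DPS} are being applied to the right Lie algebra acting on the right space — which is exactly the kind of compatibility the next theorem (\Cref{th.res2}) will make fully explicit anyway, so here I would only need the qualitative finiteness.
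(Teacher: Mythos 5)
Your argument has a fatal gap at its very first step: the claim that every object of $\bT_{\End(V)}$ already has finite length over $\fg=\fsl(V,V_*)$ is false, and \cite{DPS} does not assert it. The finite-length results of \cite{DPS} concern tensor powers of $V$ and of the \emph{restricted} dual $V_*$, i.e.\ the modules $V_*^{\otimes m}\otimes V^{\otimes n}$; the objects of $\bT_{\End(V)}$ are instead subquotients of $(V^*)^{\otimes m}\otimes V^{\otimes n}$ built from the \emph{full} dual, and these have finite length over $\End(V)$ but not over $\fg$. Indeed, already $V^*$ is a counterexample: every element of $\fg$ is a finite-rank operator, so it maps $V^*$ into $V_*$, and hence $V^*/V_*$ is a \emph{trivial} $\fg$-module of uncountable dimension -- every subspace is a $\fg$-submodule, so $V^*/V_*$, and therefore $V^*$, has infinite $\fg$-length. (Its $\fg$-socle may be small, but finite socle does not give finite length.) Consequently your restriction inequality $\ell_{\fg^M}(M)\le\ell_{\fg}(M)$, while correct, is vacuous: the right-hand side is infinite for every object involving $V^*$ nontrivially, which is exactly the interesting case. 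The statement is genuinely about the ideal quotient $\fg^M/\fg$ having to act with very few invariant subspaces on the layers coming from $V^*/V_*$, and no argument that only uses the subalgebra $\fg$ can see this.

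The paper's proof supplies precisely this missing content: one filters $(V^*)^{\otimes m}\otimes V^{\otimes n}$ as a $\fg^M$-module by pieces of the form $(V^*/V_*)^{\otimes m_1}\otimes V_*^{\otimes m_2}\otimes V^{\otimes n}$, decomposes $(V^*/V_*)^{\otimes m_1}$ into Schur-functor images, proves the nontrivial fact that each $(V^*/V_*)_\lambda$ is a \emph{simple} $\fg^M$-module (\Cref{pr.Schur}, an infinite-dimensional Schur--Weyl-type argument using diagonal elements of $\fg^M$ and rank-one idempotents concentrated on infinite index sets), and then shows via \Cref{le.tens_prod} that $(V^*/V_*)_\lambda\otimes W$ remains $\fg^M$-simple for any simple $W\in\bT_{\fg^M}$ (\Cref{co.res1}), using that $\fg$ acts densely and irreducibly on $W$ with scalar endomorphisms. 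This yields an explicit finite composition series; none of these steps is obtainable from the $\fg$-module structure alone, so your proposal would need to be rebuilt around the action of $\fg^M$ on $V^*/V_*$ rather than around restriction to $\fg$.
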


\begin{remark}\label{re.uncount}
In fact, the proof of the theorem could be adapted to the more general case covered in the previous section: $V_*$ and $V$ could be allowed to be uncountable-dimensional, so long as they are still paired by means of dual bases.  
\end{remark}

We need some preparations. The following result is very likely well known.

\begin{lemma}\label{le.tens_prod}
Let $\fG$ be a Lie algebra over some field $\bF$, and $I\subseteq\fG$ an ideal. Let $U$ be a simple $\fG/I$-module, and $W$ an $\fG$-module on which $I$ acts densely and irreducibly, and such that $\End_I(W)=\bF$. Then, $U\otimes W$ is a simple $\fG$-module. 
\end{lemma}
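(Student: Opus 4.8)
The plan is to show that any nonzero $\fG$-submodule $N \subseteq U \otimes W$ must equal the whole thing. The natural tool here is a density/Jacobson-type argument combined with the simplicity of $U$ over $\fG/I$. First I would analyze the $I$-module structure: since $I$ acts on $U$ trivially (as $U$ is a $\fG/I$-module), the $I$-action on $U \otimes W$ is just $\id_U \otimes (\text{action on } W)$. So as an $I$-module, $U \otimes W \cong W^{\oplus \dim U}$ — more canonically, $U \otimes W$ is a "$U$-indexed" sum of copies of $W$. The hypothesis $\End_I(W) = \bF$ together with density will let me pin down $I$-submodules and $I$-linear maps out of such sums.

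**The key step.** Pick a nonzero $\fG$-submodule $N \subseteq U \otimes W$. Choose $0 \neq \xi \in N$ and write $\xi = \sum_{j=1}^k u_j \otimes w_j$ with the $u_j \in U$ linearly independent and $w_j \in W$ nonzero; take such a $\xi$ with $k$ minimal. I would then argue, using that $I$ acts densely and irreducibly on $W$ with $\End_I(W) = \bF$, that one can apply elements of $U(I)$ (acting through the $W$-factor) to "separate" the tensorands. Concretely: the minimality of $k$ forces the $w_j$ to be linearly independent over $\bF$, and then a Jacobson density argument shows that for each index $j_0$ there is $a \in U(I)$ with $a w_{j_0} \neq 0$ but $a w_j = 0$ for $j \neq j_0$; applying $a$ to $\xi \in N$ yields $u_{j_0} \otimes (a w_{j_0}) \in N$, hence $u_{j_0} \otimes W' \subseteq N$ for the nonzero $I$-submodule $W' = U(I)(a w_{j_0})$, and by irreducibility of the $I$-action, $u_{j_0} \otimes W \subseteq N$. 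Thus $N$ contains $u \otimes W$ for some $0 \neq u \in U$.

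**Finishing via simplicity of $U$.** Now let $U_0 = \{ u \in U : u \otimes W \subseteq N \}$. This is a linear subspace of $U$, and it is nonzero by the previous step. I would check it is a $\fG/I$-submodule: for $g \in \fG$ and $u \in U_0$, $w \in W$, we have $g(u \otimes w) = (gu) \otimes w + u \otimes (gw) \in N$, and since $u \otimes (gw) \in u \otimes W \subseteq N$, we get $(gu)\otimes w \in N$ for all $w$, i.e. $gu \in U_0$; the $I$-action is trivial on $U$ so it preserves $U_0$ vacuously, and the action descends to $\fG/I$. Since $U$ is simple, $U_0 = U$, which gives $N = U \otimes W$. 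Hence $U \otimes W$ is simple.

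**Main obstacle.** The delicate point is the separation step: producing $a \in U(I)$ that kills all but one of the $w_j$ while not killing the remaining one. This is exactly the content of the Jacobson density theorem — the $w_j$ being linearly independent vectors in the simple $U(I)/\operatorname{Ann}$-module $W$ (with $\End_I(W) = \bF$ guaranteeing we are in the "$\bF$-central" situation where density applies cleanly) — but I should be careful about whether density over a general field $\bF$ needs the $\End_I(W) = \bF$ hypothesis in the full strength stated, or only that $W$ is irreducible; the stated hypotheses are tailored to make this go through, so I would invoke the Jacobson density theorem for the ring $U(I)$ acting on $W$ with commutant $\bF$. A secondary subtlety is ensuring the $w_j$ in a minimal-length expression are genuinely linearly independent — this follows from minimality of $k$ by the usual argument (if $w_k = \sum_{j<k} c_j w_j$, rewrite $\xi$ with fewer terms by absorbing into the $u_j$), so it is routine.
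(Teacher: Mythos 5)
Your proof is correct, and it follows the same broad two-step strategy as the paper's: first use the $I$-action to force a nonzero pure tensor $u\otimes w$ into the given submodule $N$, then use the $\fG$-action and the simplicity of $U$ to conclude. The implementations differ in two ways worth noting. For the first step the paper argues by hand: it modifies the second tensorands, uses $\End_I(W)=\bF$ to see that the resulting $I$-module automorphisms of $W$ are scalars, and thereby collapses $\sum u_i\otimes w_i$ to a pure tensor; you instead take a minimal-length element, observe that minimality makes the $w_j$ linearly independent, and invoke the Jacobson density theorem for $U(I)$ acting on the simple module $W$ with commutant $\bF$ to kill all tensorands but one --- essentially the same mechanism, packaged as a standard citation, and your handling of the minimality/independence point is fine. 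For the second step the routes genuinely diverge: the paper uses the hypothesis that $I$ acts densely on $W$ to pick $k\in I$ with $kw=hw$ and cancel the cross term in $h(u\otimes w)$, whereas you first upgrade $u\otimes w\in N$ to $u\otimes W\subseteq N$ (irreducibility of $W$ over $I$) and then note that the cross term $u\otimes gw$ already lies in $N$, so that $U_0=\{u\in U: u\otimes W\subseteq N\}$ is a nonzero $\fG$-stable subspace of $U$ and simplicity of $U$ over $\fG/I$ finishes. As a result your argument never uses the density hypothesis beyond what Jacobson density already yields from irreducibility and $\End_I(W)=\bF$, so it establishes a formally slightly stronger statement; the paper's explicit appeal to density is harmless but, as your proof shows, avoidable.
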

\begin{proof}
Let $x=\sum_{i=1}^n u_i\otimes w_i$ be a non-zero element of $U\otimes W$, with the tensor product decomposition chosen such that the $u_i$ are linearly independent and all $w_i$ are non-zero. We have to show that $x$ generates $U\otimes W$ as an $\fG$-module.

Because $I$ annihilates $U$, it acts on $\bigoplus_{i=1}^n \bF u_i\otimes W\cong W^{\oplus n}$. By the simplicity of $W$ over $I$, there are vectors $w'_i\in W$, $i\in\{1,\ldots,n\}$, with $w'_1=w_1$ and such that the projection $W^{\oplus n}\to W$ onto the first component maps the $I$-submodule of $W^{\oplus n}$ generated by $\sum u_i\otimes w'_i$ isomorphically onto $W$. 

Now note that $w'_1\mapsto w'_i$, $i>1$ extend to $I$-module automorphisms of $W$. By the condition $\End_I(W)=\bF$, these automorphisms are scalar: $w'_i=t_i w'_1=t_i w_1$ for all $i>1$; for simplicity, set $t_1=1$ so that this identity holds for all $i$. Now, substituting $\sum_i t_i u_i$ for $u_1$ and denoting $u_1=u$, we may assume that a non-zero simple tensor $u\otimes w\in U\otimes W$ belongs to the $I$-span of $x$. 

Starting with a simple tensor $u\otimes w$ as above, note first that the enveloping algebra $U(I)$ can act so as to obtain any other tensor of the form $u\otimes w'$ (because $I$ annihilates $U$ and acts irreducibly on $W$). 

On the other hand, for $h\in\fG$, we have $h(u\otimes w)=hu\otimes w+u\otimes hw$. Since $I$ acts densely on $W$, we can find $k\in I$ such that $kw=hw$. In this case we have $(h-k)(u\otimes w)=hu\otimes w$; since $U$ is simple over $\fG$, all simple tensors of the form $u'\otimes w$ are in the $\fG$-span of $u\otimes w$. Combining this with the previous paragraph, we get the desired conclusion.  
\end{proof}

We now need the following infinite-dimensional analogue of Schur-Weyl duality.

\begin{proposition}\label{pr.Schur}
For any partition $\lambda=(\lambda_1\ge\ldots\ge\lambda_m\ge 0)$, the $\fg^M$-module $(V^*/V_*)_\lambda$ is simple.
\end{proposition}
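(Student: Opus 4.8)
The plan is to deduce the simplicity of the $\fg^M$-module $(V^*/V_*)_\lambda$ from a Schur-Weyl type statement together with a density argument, and to package the algebra at play using \Cref{le.tens_prod}. First I would recall the finite-dimensional prototype: for a vector space $U$ of dimension $N\ge |\lambda|$, the Schur functor image $U_\lambda$ is an irreducible $\fgl(U)$-module, and more relevantly $\fsl(U)$-module. The point is to transport this to the quotient $V^*/V_*$, which carries an action of $\fg^M$ but is \emph{not} a weight module, so the usual combinatorial arguments do not apply directly.

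The key structural observation is that $\fg^M$ acts on $V^*/V_*$ through a surjection onto a Lie algebra that behaves, on finite-dimensional pieces, like $\fgl$ of a large space: concretely, for each finite $I\subseteq\bN$ the subalgebra of $I$-concentrated matrices acts on the $I$-concentrated part of $V^*/V_*$, and as $I$ grows these exhaust everything. So I would first verify a density/irreducibility statement: $\fg^M$ acts densely on $V^*/V_*$ in the sense required by \Cref{le.tens_prod}, and more importantly on the Schur functor image $(V^*/V_*)_\lambda$. For the density, given any finite set of vectors $\overline{x}_1,\dots,\overline{x}_k$ in $V^*/V_*$ and any target values, one chooses a finite $I$ large enough that (after modifying the $x_j$ by elements of $V_*$, which costs nothing in the quotient) all the relevant data is $I$-concentrated and the $x_j|_I$ are linearly independent; then finite-dimensional linear algebra inside $\End(\bF^I)\subseteq\fg^M$ produces the desired matrix. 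The analogous statement for $(V^*/V_*)_\lambda$ follows because an element of a Schur functor image lives in $(V^*/V_*)_\lambda|_I$ for $I$ large enough, and there $\fsl(\bF^I)$ acts irreducibly on $(\bF^I)_\lambda$ once $|I|>|\lambda|$; taking the union over $I$ gives irreducibility of $(V^*/V_*)_\lambda$ under $\fg^M$ directly. In fact, once this "local irreducibility plus exhaustion" is set up cleanly, the simplicity of $(V^*/V_*)_\lambda$ is nearly immediate: any nonzero submodule contains a nonzero vector concentrated in some finite $I$, which under $\fsl(\bF^{I'})$ for large $I'\supseteq I$ generates all of $(\bF^{I'})_\lambda$, and letting $I'\to\bN$ we recover everything.

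I should also address the role of $V_*$ carefully, since the only subtlety is the passage to the quotient. The relevant fact, which I would invoke from \cite{Mackey}, is that $V^*/V_*$ is a simple $\fg^M$-module (this is stated in the introduction to \Cref{se.res1}), and more generally one needs that $\fg^M$ acts so that $\End$ of $V^*/V_*$ over the image is just scalars — this is exactly where \Cref{le.tens_prod} with $\fG=\fg^M$ and a suitable ideal $I$ (the kernel of the action on the finite part, or $\fg$ itself inside $\fg^M$) would enter if one wanted to phrase the induction on the number of boxes of $\lambda$ via tensor products of copies of $V^*/V_*$. Either route works; I would favor the direct exhaustion argument because it avoids having to identify $(V^*/V_*)_\lambda$ as a tensorial subquotient and keeps the bookkeeping minimal.

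The main obstacle, I expect, is making the exhaustion rigorous: one must check that the Schur functor commutes with the filtered colimit $(V^*/V_*) = \varinjlim_I (V^*/V_*)|_{?}$ in the appropriate sense, i.e. that every element of $(V^*/V_*)_\lambda$ is supported on a finite index set and that the $\fsl$-actions of the approximating finite-dimensional pieces are compatible and jointly irreducible — and crucially that the quotient by $V_*$ does not destroy linear independence of restricted vectors (it does not, because modifying finitely many entries can always be absorbed by enlarging $I$). Everything else is classical finite-dimensional representation theory of $\fsl_N$ applied one layer at a time.
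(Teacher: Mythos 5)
Your central mechanism --- exhausting $V^*/V_*$ by pieces concentrated on finite index sets $I$ and invoking irreducibility of $(\bF^I)_\lambda$ under $\fsl(\bF^I)\subseteq\fg^M$ --- cannot work, for two related reasons. First, no nonzero element of $V^*/V_*$ is concentrated in a finite set: any representative of a nonzero class has infinitely many nonzero entries, and modifying finitely many entries (the only freedom the quotient allows) cannot change that; so $V^*/V_*$ is not a filtered union of finitely supported pieces, and the ``choose $I$ finite and large enough'' step has no content. Second, and more fatally, a matrix concentrated in a finite block $I\times I$ has finite rank, hence maps $V^*$ into $V_*$ and acts as \emph{zero} on $V^*/V_*$; the whole action of $\fg^M$ on $V^*/V_*$ factors through $\fg^M/\fg$ (this is used explicitly in the proof of \Cref{pr.ess_filt}). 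So the subalgebras $\End(\bF^I)$ and $\fsl(\bF^I)$ you rely on act trivially, the density argument via ``finite-dimensional linear algebra inside $\End(\bF^I)$'' proves nothing, and ``local irreducibility plus exhaustion'' collapses. The fallback you mention via \Cref{le.tens_prod} with ideal $\fg$ also gives nothing here, since $\fg$ annihilates $(V^*/V_*)_\lambda$: that lemma requires the ideal to act densely and irreducibly on one tensorand, and in the paper it is used the other way around, with \Cref{pr.Schur} serving as the input $U$ of the lemma (in \Cref{co.res1}), not as its output.

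The paper's proof places the finite-dimensionality elsewhere: starting from an arbitrary nonzero $x\in(V^*/V_*)_\lambda\subseteq(V^*/V_*)^{\otimes k}$, it partitions $\bN$ into $k$ \emph{infinite} blocks $I_1,\dots,I_k$, uses a diagonal element of $\fg^M$ constant on each block (a genuinely infinite matrix, nontrivial modulo $\fg$) to separate eigencomponents and reduce $x$, $S_k$-equivariantly, to a combination of permutations of $x_1\otimes\cdots\otimes x_k$ with $x_j$ concentrated in $I_j$; it then uses transitivity of $\fg^M/\fg$ on $V^*/V_*$ to produce elements $a_{ij}\in\fg^M$ spanning a copy of $\fgl(k)$ acting on the span $W$ of $x_1,\dots,x_k$, and applies ordinary finite-dimensional Schur--Weyl duality to $c_\lambda(W^{\otimes k})$, finally varying the $x_j$ to fill out all of $(V^*/V_*)_\lambda$. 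If you want to salvage your outline, the finite-dimensional approximation must be by finite-dimensional \emph{spans} of infinitely supported vectors, moved around by infinite matrices, not by finite index supports.
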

\begin{proof}
Let $\displaystyle k=|\lambda|=\sum_{i=1}^m\lambda_i$. Choose an arbitrary non-zero $x\in (V^*/V_*)^{\otimes k}$, thought of as a sum $\sum_\ell x^\ell$ for $x^\ell=x^\ell_1\otimes\ldots\otimes x^\ell_k$. 

We denote the symmetric group on $k$ letters by $S_k$. Partition $\bN$ into $k$ infinite subsets $I_1,\ldots, I_k$ such that the element of $(V^*/V_*)^{\otimes k}$ defined by 
\[
	x_{\cat{res}}=\sum_\ell \sum_{\sigma\in S_k} (x^\ell_1|_{I_{\sigma(1)}})\otimes\ldots\otimes (x^\ell_k|_{I_{\sigma(k)}}) 
\] 
is non-zero; we leave it to the reader to show that this is possible. Now choose $k$ complex numbers $t_j$, $j\in\{1,\ldots,k\}$ such the sums $\sum_j m_j t_j$ for non-negative integers $\sum_j m_j=k$ are distinct for different choices of tuples $m_1,\ldots, m_k$ (e.g. $t_j$ could equal $(k+1)^j$), and let $h\in\fg^M$ be the diagonal matrix whose $I_j$-indexed entries are equal to $t_j$. By breaking everything up into $h$-eigenspaces, we see that the $\fg^M$-module generated by $x$ contains $x_{\cat{res}}$. In order to keep notation simple, we substitute $x_{\cat{res}}$ for $x$ and assume that the individual tensorands $x^\ell_j$ of each summand $x^\ell$ of $x$ are concentrated in distinct $I_j$'s. 

Now consider the subspace $W_1$ of $V^*/V_*$ generated by all $I_1$-concentrated $x^\ell_j$'s, and let $p_1,\ldots, p_s$ be rank one idempotent $I_1$-concentrated matrices in $\fg^M$ such that $\sum_i p_i$ acts as the identity on $W_1$. Since $x=\sum_i p_i x$, some $p_ix$ must be non-zero. Substitute it for $x$, and repeat the process with $I_2$ in place of $I_1$, etc. The resulting non-zero element, again denoted by $x$, will now be a linear combination of simple tensors $x^\ell$ as before, with tensorands $x^\ell_j$ concentrated in distinct $I_j$'s for each $\ell$, and such that all $x^\ell_i$'s concentrated in $I_j$ (for all $\ell$) are equal. Denoting by $x_j$ this common $I_j$-concentrated vector, our element $x$ is a linear combination of permutations of $x_1\otimes\ldots\otimes x_k$. 

Note that the entire procedure we have just described is $S_k$-equivariant: If the vector we started out with was in $(V^*/V_*)_\lambda\subseteq (V^*/V_*)^{\otimes k}$, then so is the output of the process. We assume this to be the case for the rest of the proof.

Because $\fg^M$ acts transitively on $V^*/V_*$ (in the sense that any non-zero element can be transformed into any other element by acting on it with some matrix in $\fg^M$), we can find $k^2$ elements $a_{ij}$ of $\fg^M$ such that $a_{pq}x_r=\delta_{qr}x_p$. The elements $a_{ij}$ generate a Lie algebra isomorphic to $\fgl(k)$, and by ordinary Schur-Weyl duality we conclude that the $\fg^M$-module generated by $x$ contains $c_\lambda(W^{\otimes k})$, where $W$ is the linear space spanned by the $x_j$, and $c_\lambda$ is the Young symmetrizer corresponding to $\lambda$.   

Since for each $j$ the vector $x_j$ can be transformed into any other $I_j$-concentrated vector by acting on it with some $I_j$-concentrated matrix, the conclusion from the previous paragraph applies to any choice of $x_j$'s. The desired result follows from the fact that every element of $c_\lambda(V^*/V_*)^{\otimes k}$ is a sum of elements from $c_\lambda(W^{\otimes k})$ for various $W$ spanned by various tuples $\{x_j\}$.  
\end{proof}

As a consequence of \Cref{le.tens_prod} and \Cref{pr.Schur} we get:

\begin{corollary}\label{co.res1}
Let $W$ be a simple object in $\bT_{\fg^M}$, and $\lambda$ be a partition. Then, the $\fg^M$-module $(V^*/V_*)_\lambda\otimes W$ is simple.
\end{corollary}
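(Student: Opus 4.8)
The plan is to deduce the corollary from \Cref{le.tens_prod}, applied with $\fG=\fg^M$, with the ideal $I$ taken to be $\fg=\fsl(V,V_*)=\fsl(\infty)$, with the simple $\fG/I$-module being $U=(V^*/V_*)_\lambda$, and with $W$ itself in the role of the module $W$ of that lemma; its conclusion is then exactly that $(V^*/V_*)_\lambda\otimes W$ is simple over $\fg^M$. Two of the required hypotheses are quickly disposed of. First, $\fg\subseteq\fg^M$ is an ideal: multiplying a finite-rank matrix by a matrix in $\fg^M$ on either side yields a finite-rank matrix, and such a commutator has zero trace, hence lies in $\fsl(\infty)$. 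Second, because every $g\in\fg$ is a finite-rank endomorphism of $V$, its transpose has finite rank and so carries $V^*$ into $V_*\subseteq V^*$; thus $\fg$ annihilates $V^*/V_*$, and therefore also the subspace $(V^*/V_*)_\lambda\subseteq(V^*/V_*)^{\otimes|\lambda|}$. Hence $(V^*/V_*)_\lambda$ is genuinely a module over $\fg^M/\fg$, and it is simple there because it is already simple over $\fg^M$ by \Cref{pr.Schur}.

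What remains is to check that $\fg$ acts on $W$ irreducibly and densely with $\End_\fg(W)=\bC$. For this I would use that the simple objects of $\bT_{\fg^M}$, restricted to $\fg=\fsl(\infty)$, are precisely the simple tensor modules $V_{\lambda',\mu'}$ of \cite{DPS} --- part of the identification $\bT_{\fg^M}\simeq\bT_{\fsl(\infty)}=\bT$ of \cite[5.1, 7.9]{Mackey}. In particular $W$ is a simple $\fg$-module, which gives irreducibility. Since $W$ is a subquotient of a tensor power of the countable-dimensional space $V_*\otimes V$, it has at most countable dimension over the uncountable algebraically closed field $\bC$, so Dixmier's form of Schur's lemma gives $\End_\fg(W)=\bC$. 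Density is then automatic: $W$ is a simple $U(\fg)$-module with $\End_{U(\fg)}(W)=\bC$, so by the Jacobson density theorem the image of $U(\fg)$ in $\End_\bC(W)$ is dense; and since $\fg$ kills $U$, so that $U(\fg)$ acts on $U\otimes W$ through the second tensorand alone, this is exactly the amount of density used in the proof of \Cref{le.tens_prod}. Applying that lemma completes the argument.

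The only step that is not pure bookkeeping is the implication "$W$ simple over $\fg^M$" $\Rightarrow$ "$W$ simple over $\fg$", and this is where the hypothesis $W\in\bT_{\fg^M}$ must genuinely be used: a simple $\fg^M$-module need not restrict to a simple $\fg$-module, the module $V^*/V_*$ being the pertinent counterexample (it is simple over $\fg^M$ by \Cref{pr.Schur} but restricts to $\fg$ as a trivial module). Ruling this out via the structural results of \cite{Mackey} is the crux; the rest is routine.
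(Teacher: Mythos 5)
Your argument is correct and has the same skeleton as the paper's: both apply \Cref{le.tens_prod} with $\fG=\fg^M$, $I=\fg$, $U=(V^*/V_*)_\lambda$ (simple by \Cref{pr.Schur}) and the given $W$. The only divergence is in how the hypotheses on $W$ are verified. The paper quotes \cite[Corollary 7.6]{Mackey} for simplicity and density of the $\fg$-action and gets $\End_{\fg}(W)=\bC$ from the fact that the simples in $\bT_{\fg}=\bT_{\fg^M}$ are highest weight modules; you instead take $\fg$-simplicity from the identification of $\bT_{\fg^M}$ with $\bT_{\fsl(\infty)}$, obtain $\End_{\fg}(W)=\bC$ from Dixmier's version of Schur's lemma (legitimate, since $W$ is countable-dimensional and $\bC$ is uncountable), and replace genuine Lie-algebra density by density of the image of $U(\fg)$. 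That last substitution is weaker than the hypothesis of \Cref{le.tens_prod} as literally stated (its proof picks $k\in I$ with $kw=hw$), but your accompanying remark is exactly right: membership $k\in I$ is used there only to guarantee $k(u\otimes w)=u\otimes kw$, and since $\fg$ kills $(V^*/V_*)_\lambda$ every $k\in U(\fg)$ has this property, so the lemma's proof goes through with your weaker input --- strictly speaking you are invoking a mild variant of the lemma rather than the lemma verbatim. One small imprecision: in checking that $\fg$ is an ideal of $\fg^M$ you argue ``finite rank and traceless, hence in $\fsl(\infty)$''; finite rank only places the commutator in $V\otimes V^*$, and the correct observation is that $[g,v\otimes\varphi]=gv\otimes\varphi-v\otimes(\varphi\circ g)$ lies in $V\otimes V_*$ precisely because $g\in\fg^M$ preserves $V_*$. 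Neither point is a genuine gap.
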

\begin{proof}
We apply \Cref{le.tens_prod} to the Lie algebra $\fG=\fg^M$, the ideal $I=\fg$, and the modules $U=(V^*/V_*)_\lambda$ and $W$. We already know that $W$ is simple over $\fg$ and is acted upon densely by the latter Lie algebra (\cite[Corollary 7.6]{Mackey}), and the remaining condition $\End_{\fg}(W)=\bC$ follows for example from the fact that all simple modules in $\bT_{\fg}=\bT_{\fg^M}$ are highest weight modules with respect to a certain Borel subalgebra of $\fg$.  
\end{proof}

We can now turn to \Cref{th.res1}.

\begin{proof of res1}
Since all objects of $\bT_{\End(V)}$ are isomorphic to subquotients of finite direct sums of tensor products $(V^*)^{\otimes m}\otimes V^{\otimes n}$, it suffices to prove the conclusion for these tensor products. In turn, when regarded as $\fg^M$-modules, these tensor products have filtrations by finite direct sums of objects of the form $(V^*/V_*)^{\otimes m_1}\otimes V_*^{\otimes m_2}\otimes V^{\otimes n}$. The leftmost tensorand $(V^*/V_*)^{\otimes m_1}$ breaks up as a direct sum of images $(V^*/V_*)_\lambda$ of Schur functors, while $V_*^{\otimes m_2}\otimes V^{\otimes n}$ has a finite filtration by simple modules from the category $\bT_{\fg^M}$. The conclusion now follows from \Cref{co.res1} above.   
\end{proof of res1}

%%%%%%%%%%%%%%%%%%%%%%%%%%%%%%%%%%%%%%%%%%%%%%%%%%%%%%%%%%%%%%%%%%%%%%%%%%%%%%%%%%%%%%%%%%%%%%%%%%%%%%%%%%%%%%%%%%
%%%%%%%%%%%%%%%%%%%%%%%%%%%%%%%%%%%%%%%%%%%%%%%%%%%%%%%%%%%%%%%%%%%%%%%%%%%%%%%%%%%%%%%%%%%%%%%%%%%%%%%%%%%%%%%%%%
\section{Socle filtrations of \texorpdfstring{$\bT_{\End(V)}$}{TEnd(V)}-objects over \texorpdfstring{$\fg^M(V,V_*)$}{gM(V,V*)}}\label{se.res2}

We now tackle the problem of finding the socle filtrations of simples in $\bT_{\End(V)}$ as $\fg^M$-modules. We start with a definition.

\begin{definition}\label{de.defess}
A filtration  $M^0\subseteq M^1\subseteq\ldots\subseteq M^n=M$ of an object $M$ in an abelian category is \define{essential} if for every $p<q<r$, the module $M^q/M^p$ is essential in $M^r/M^q$, i.e. intersects every non-zero submodule of $M^r/M^q$ non-trivially.
\end{definition}

\begin{remark}\label{re.defess}
It can be shown by induction on $r-p$ that the condition in \Cref{de.defess} is equivalent to $M^{p+1}/M^p$ being essential in $M^{p+2}/M^p$ for all $p$. 
\end{remark}

For dealing with tensor products of copies of $V^*$ and $V_*$ we use the following notation: For a binary word ${\bf r}=(r_1,\ldots,r_k)$, $r_i\in\{0,1\}$, let $V^{\bf r}$ be the tensor product $\bigotimes_{i=1}^k V^{r_i}$, where $V^0=V_*$ and $V^1=V^*$. We denote $\sum_i r_i$ by $|{\bf r}|$. 

Now consider the following (ascending) filtration of $W=(V^*)^{\otimes m}\otimes V^{\otimes n}$:
\begin{equation}\label{eq.filt}
	W^k=\sum_{|{\bf r}|\subseteq k}V^{\bf r}\otimes V^{\otimes n},\ \text{ for every } 0\subseteq k\subseteq m. 
\end{equation}

\begin{proposition}\label{pr.ess_filt}
The filtration \Cref{eq.filt} of $W=(V^*)^{\otimes m}\otimes V^{\otimes n}$ is essential in ${\fg^M}-\mathrm{mod}$. 
\end{proposition}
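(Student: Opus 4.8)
The goal is to show that the filtration $W^0\subseteq W^1\subseteq\cdots\subseteq W^m=W$ is essential as a filtration of $\fg^M$-modules. By \Cref{re.defess} it suffices to show, for each $k$, that $W^{k}/W^{k-1}$ is essential in $W^{k+1}/W^{k-1}$ — i.e., that every nonzero $\fg^M$-submodule of $W^{k+1}/W^{k-1}$ meets the submodule $W^{k}/W^{k-1}$ nontrivially. The first reduction I would make is to unwind the associated graded pieces: $W^{k}/W^{k-1}$ is (up to permutation of tensorands, and up to direct summands coming from different positions of the ``$V^*$'' slots) a direct sum of copies of $(V^*/V_*)^{\otimes k}\otimes V_*^{\otimes(m-k)}\otimes V^{\otimes n}$, while the quotient $W^{k+1}/W^{k}$ is a direct sum of copies of $(V^*/V_*)^{\otimes(k+1)}\otimes V_*^{\otimes(m-k-1)}\otimes V^{\otimes n}$. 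So concretely I want to understand, inside a module of the shape $(V^*/V_*)^{\otimes(k+1)}\otimes(\text{stuff})$, whether a nonzero $\fg^M$-submodule that is not entirely contained in the ``lower'' layer can be pushed down into it. Equivalently: take an element of $W^{k+1}$ that genuinely involves $k+1$ of the $V^*$-slots modulo $V_*$; I must produce, by acting with $\fg^M$, a nonzero element lying in $W^{k}$.

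**Key steps.** First, I would fix a nonzero submodule $N\subseteq W^{k+1}/W^{k-1}$ with $N\not\subseteq W^{k}/W^{k-1}$, pick $\bar x\in N$ whose image in $W^{k+1}/W^{k}$ is nonzero, and lift to $x\in W^{k+1}$. Expanding $x$ over the basis and over positions, the nonzero image in $W^{k+1}/W^k$ means that in some fixed set $S$ of $k+1$ coordinate-slots the relevant tensorands are nonzero in $V^*/V_*$. Second — and this is the technical heart, borrowed in spirit from the proof of \Cref{pr.Schur} — I would use a ``separation of supports'' argument: partition $\bN$ into infinitely many infinite blocks and act by a suitable diagonal matrix $h\in\fg^M$ to split $x$ into $h$-eigencomponents, isolating a nonzero component whose tensorands (in the distinguished slots) have pairwise disjoint, still-infinite supports. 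Third, on such a ``spread out'' element I would act by a single rank-one $I$-concentrated idempotent $p\in\fg^M$ supported on one of the infinite blocks carrying one of the $k+1$ distinguished tensorands. Acting by $p$ replaces that tensorand $x_j\in V^*/V_*$ by $p\,x_j$, which lands in $V_*$ (it is a vector with finitely many nonzero entries) — hence is zero in $V^*/V_*$ but nonzero in $V^*$; the crucial point is to arrange that $p\,x$ is still nonzero, which is possible because after the spreading step the distinguished tensorands are supported on disjoint infinite blocks and one can choose $p$ to act as a nonzero rank-one operator on the span of all the $I_j$-concentrated tensorands appearing, exactly as in the $W_1,\dots$ construction in \Cref{pr.Schur}. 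Then $p\,x$ is a nonzero element of $N$ that now genuinely lives in $W^{k}/W^{k-1}$ (it has strictly fewer than $k+1$ of its $V^*$-slots nonzero modulo $V_*$), proving essentiality.

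**Main obstacle.** The delicate point is the third step: after spreading out the supports, one must guarantee that some rank-one $I$-concentrated $p\in\fg^M$ both (a) kills at least one distinguished tensorand modulo $V_*$ and (b) does not kill the whole element $x$. This is where the argument from \Cref{pr.Schur} — choosing finitely many rank-one idempotents $p_1,\dots,p_s$ summing to the identity on the finite-dimensional span of the relevant $I_j$-concentrated vectors, so that $x=\sum p_i x$ forces some $p_i x\neq 0$ — does the real work, and I would have to check that it survives the presence of the extra $V_*^{\otimes(m-k-1)}\otimes V^{\otimes n}$ factors (which are inert under $I_j$-concentrated matrices once $I_j$ is chosen disjoint from the finitely many coordinates those tensorands see — and here the countability of $V$ lets us always find such room, cf. \Cref{re.uncount}). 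A secondary bookkeeping nuisance is the ``positions'' issue: $W^{k+1}/W^k$ is not literally $(V^*/V_*)^{\otimes(k+1)}\otimes(\cdots)$ but a sum over which $k+1$ of the $m$ slots are the distinguished ones; since $\fg^M$ acts diagonally this decomposition is $\fg^M$-stable and one handles the summands one at a time, but it must be said carefully. Modulo these points the argument is a direct adaptation of the transitivity and support-separation techniques already established, so I expect no genuinely new ingredient beyond careful combinatorial bookkeeping.
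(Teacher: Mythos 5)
Your overall strategy runs parallel to the paper's: lower the filtration degree by acting with a finite\,-rank element of $\fg^M$ chosen to annihilate the ``inert'' $V_*$- and $V$-tensorands (any finite-rank matrix sends $V^*$ into $V_*$), and then fight for nonvanishing. But the crux of your argument --- step three --- does not hold together as written. A rank-one idempotent in $\fg^M$ is necessarily of the form $v\otimes\varphi$ with $v\in V$, $\varphi\in V_*$ (finite rows and columns force both vectors to have finite support), so its image on $V^*$ lies in $\bC\varphi\subseteq V_*$ and it acts as \emph{zero} on $V^*/V_*$. Consequently finitely many such $p_1,\dots,p_s$ can never ``sum to the identity on the finite-dimensional span of the relevant $I_1$-concentrated vectors'': those distinguished tensorands are precisely the ones that are nonzero modulo $V_*$, while $\sum_i p_i$ has image inside $V_*$. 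So the identity $x=\sum_i p_i x$ you want to borrow from the proof of \Cref{pr.Schur} simply fails in your setting; in \Cref{pr.Schur} the corresponding operators act nontrivially on the quotient $V^*/V_*$ and hence are not genuinely rank-one matrices, whereas your mechanism for dropping from $W^{k+1}$ to $W^k$ forces $p$ to be an honest finite matrix. You cannot have both properties in one operator, and your proposal offers no substitute for the nonvanishing argument. The paper resolves exactly this tension differently: it isolates the claim that the annihilator in $\fg$ of the distinguished part $z\in(V^*)^{\otimes s}$ (nonzero mod $V_*$) contains no finite-corank subalgebra, and proves it in two steps --- first, if some $\bN_{\ge p}$-concentrated $a\in\fg^M$ satisfies $az\neq 0$, then a sufficiently large finite truncation $a_{\le r}\in\fg$ still satisfies $a_{\le r}z\neq 0$ (nonvanishing is detected in finitely many coordinates); second, for $a\in\fg^M$ the concentration condition is irrelevant modulo $\fg$, so one only needs that no nonzero element of $(V^*/V_*)^{\otimes s}$ is killed by all of $\fg^M$, which is \Cref{pr.Schur} used as a black box. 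That truncation-plus-quotient reduction is the missing idea in your write-up.

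A secondary but genuine gap: to conclude, you need $p\,x$ to be nonzero \emph{modulo} $W^{k-1}$, not merely nonzero in $W$; an element that is a combination of simple tensors with $k$ slots outside $V_*$ can perfectly well be nonzero yet lie in $W^{k-1}$ (e.g.\ $y\otimes\varphi-y'\otimes\varphi$ with $y-y'\in V_*$). Your parenthetical only establishes membership in $W^k$, and your nonvanishing discussion, even if repaired, targets $p\,x\neq 0$. The paper's formulation avoids this by phrasing the key claim about the image of the distinguished part in $(V^*/V_*)^{\otimes s}$, i.e.\ by controlling the element at the level of the associated graded rather than in $W$ itself; any repair of your argument would have to do the same.
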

\begin{proof}
By \Cref{re.defess}, it suffices to show that for any $k\ge 0$, the $\fg^M$-module generated by any element  $x\in W^{k+2}-W^{k+1}$ intersects $W^{k+1}-W^k$ (the minus signs stand for set difference). Moreover, it is enough to assume that $x$ is a sum of simple tensors $y=y_1\otimes\ldots\otimes y_m\otimes x_1\otimes\ldots\otimes x_n$ for $y_i\in V^*$ and $x_j\in V$ such that exactly $k+2$ of the $y_i$'s are in $V_*$. 

Acting on a term $y$ as above with an element $g$ of $\fg$ which annihilates all $y_i\in V_*$ and all $x_j$ will produce an element of $W^{k+1}$, which belongs to $W^{k+1}-W^k$ provided it is non-zero; this element can be written as a sum of simple tensors, each of which has the tensorands $y_i\in V_*$ and $x_j$ in common with the original term $y$. Hence, focusing on the action of $g$ on only those tensorands $y_i$ which do not belong to $V_*$, it is enough to prove the following claim (which we apply to $s=m-(k+2)$): 

The annihilator in $\fg$ of an element $z\in (V^*)^{\otimes s}$ whose image in $(V^*/V_*)^{\otimes s}$ is non-zero does not contain a finite corank subalgebra (as defined in \cite[$\S$ 3.5]{Mackey}). 

Fixing $p\in\bN$, we have to prove that some matrix $a\in \fg$ concentrated in $\bN_{\ge p}=\{p,p+1,\ldots\}$ does not annihilate $z$. In fact, it is enough to prove this for $a\in\fg^M$. Indeed, it would then follow that for sufficiently large $q>p$, the vector $(az)_{\le q}$ obtained by annihilating all coordinates with index larger than $q$ is non-zero. But we can find some large $r$ such that $(az)_{\le q}$ equals $(a_{\le r}z)_{\le r}$, where $a_{\le r}$ is the $\{p,\ldots,r\}$-concentrated truncation of $a$. We would then conclude that $a_{\le r}z$ is non-zero, and the proof would be complete. 

Finally, to show that some $\bN_{\ge p}$-concentrated $a\in\fg^M$ does not annihilate $z$, it suffices to pass to the quotient by $V_*$, and regard $z$ as a non-zero element of $(V^*/V_*)^{\otimes s}$. Since $\fg^M$ acts on $V^*/V_*$ via its quotient $\fg^M/\fg$, being $\bN_{\ge p}$-concentrated no longer matters: any element of $\fg^M$ can be brought into $\bN_{\ge p}$-concentrated form by adding an element of $\fg$. In conclusion, the desired result is now simply that no non-zero element of $(V^*/V_*)^{\otimes s}$ is annihilated by $\fg^M$; this follows immediately from \Cref{pr.Schur}, for example.   
\end{proof}

The proof is easily applicable to traceless tensors in $(V^*)^{\otimes m}\otimes V^{\otimes n}$, i.e. the intersection of the kernels of all $mn$ evaluation maps \[ (V^*)^{\otimes m}\otimes V^{\otimes n}\to (V^*)^{\otimes (m-1)}\otimes V^{\otimes (n-1)}. \] In other words:

\begin{corollary}\label{co.ess_filt}
Let $W\subseteq (V^*)^{\otimes m}\otimes V^{\otimes n}$ be the space of traceless tensors, and set
\begin{equation}\label{eq.filt2}
	W^k=W\cap\left(\sum_{|{\bf r}|\le k}V^{\bf r}\otimes V^{\otimes n}\right),\ \text{ for every } 0\le k\le m.
\end{equation}
The filtration $\{W^k\}$ of $W$ is essential over $\fg^M$. \qedhere
\end{corollary}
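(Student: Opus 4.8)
The plan is to obtain the corollary formally from \Cref{pr.ess_filt}, rather than by rerunning its argument. The one general fact I would isolate at the outset is that essentiality (in the sense of \Cref{de.defess}) passes to submodules: if $A\subseteq B$ is an essential subobject of an object $B$ in an abelian category and $C\subseteq B$ is an arbitrary subobject, then $A\cap C$ is essential in $C$. Indeed, a nonzero subobject $N\subseteq C$ is in particular a nonzero subobject of $B$, so $A\cap N\ne 0$; and $A\cap N\subseteq(A\cap C)\cap N$, so $(A\cap C)\cap N\ne 0$. This applies verbatim in $\fg^M\text{-mod}$.

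Write $T=(V^*)^{\otimes m}\otimes V^{\otimes n}$ and let $T^k$ denote the filtration of $T$ from \Cref{eq.filt}, so that the space $W$ of traceless tensors and the filtration \Cref{eq.filt2} are $W\subseteq T$ and $W^k=W\cap T^k$. I would first remark that $W$ is a $\fg^M$-submodule of $T$: it is cut out by the $mn$ contraction maps, each of which is $\fgl(V)$-equivariant because the pairing $V^*\otimes V\to\bC$ is, hence a fortiori $\fg^M$-equivariant. Consequently every $W^k$ is a $\fg^M$-submodule of $T$.

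Now fix $0\le p<q<r\le m$, set $B=T^r/T^p$, and regard $A=T^q/T^p$ as a submodule of $B$. Intersecting the chain $T^p\subseteq T^q\subseteq T^r$ with $W$ and passing to quotients yields an injection of $\fg^M$-modules $\iota:W^r/W^p\hookrightarrow B$ (injective because $W\cap T^r\cap T^p=W\cap T^p=W^p$); let $C\subseteq B$ be its image. A short submodule computation, using only $T^p\subseteq T^q$, shows that $\iota$ restricts to an isomorphism of $W^q/W^p$ onto $A\cap C$. For this triple, \Cref{pr.ess_filt} directly gives that $A$ is essential in $B$; by the general fact above, $A\cap C$ is essential in $C$; transporting along $\iota$, $W^q/W^p$ is essential in $W^r/W^p$. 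As $p<q<r$ were arbitrary, $\{W^k\}$ is an essential filtration over $\fg^M$.

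I do not expect a genuine obstacle here: once \Cref{pr.ess_filt} is in hand, the corollary is essentially formal, and the only step requiring any care is the identification of $W^q/W^p$ with $A\cap C$, which is a routine submodule chase. For the record, the alternative route suggested by the text — running the proof of \Cref{pr.ess_filt} directly on traceless tensors — also works: the elements $g\in\fg$ used there to witness essentiality preserve tracelessness, so one never leaves $W$, and the nonvanishing step, which ultimately rests on \Cref{pr.Schur} applied to $(V^*/V_*)^{\otimes s}$, is insensitive to the traceless condition, since the contraction $V^*\otimes V\to\bC$ does not descend modulo $V_*$.
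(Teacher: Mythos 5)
Your argument is correct, but it is not the route the paper takes: the paper simply observes that the element-level proof of \Cref{pr.ess_filt} can be rerun verbatim inside the space of traceless tensors (the operators $g\in\fg$ produced there preserve tracelessness, and the nonvanishing step via \Cref{pr.Schur} is unaffected), whereas you deduce the corollary formally from \Cref{pr.ess_filt} itself. Your two ingredients both check out: the standard fact that an essential subobject $A\subseteq B$ meets any subobject $C\subseteq B$ in a subobject $A\cap C$ essential in $C$, and the identification of $\iota(W^q/W^p)$ with $(T^q/T^p)\cap\iota(W^r/W^p)$, which is exactly the modular law $(W^r+T^p)\cap T^q = (W^r\cap T^q)+T^p = W^q+T^p$ (valid since $T^p\subseteq T^q$), together with equivariance of the contraction maps making $W$ a $\fg^M$-submodule. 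What your approach buys is robustness: it shows that intersecting the filtration \Cref{eq.filt} with \emph{any} $\fg^M$-submodule of $(V^*)^{\otimes m}\otimes V^{\otimes n}$ yields an essential filtration, so the same statement for $W_{\lambda,\mu}$ needed in \Cref{th.res2} comes for free, and one avoids re-examining the somewhat delicate reduction to simple tensors in the proof of \Cref{pr.ess_filt} under the traceless constraint. (You also implicitly read \Cref{de.defess} as requiring $M^q/M^p$ essential in $M^r/M^p$, which is the intended reading given \Cref{re.defess}.) The only weak spot is your closing aside about the paper's route: the phrase about the contraction not descending modulo $V_*$ is not really the relevant point there, but since that remark is not part of your actual proof it does not affect correctness.
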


We can push this even further, making use of the $S_m\times S_n$-equivariance of the corollary. Recall that the irreducible objects in $\bT_{\End(V)}$ are precisely the modules $W_{\lambda,\mu}$ of traceless tensors in $(V^*)^{\otimes |\lambda|}\otimes V^{\otimes |\mu|}$, for partitions $\lambda,\mu$ (see e.g. \cite[Theorem 4.1]{Mackey} and discussion preceding it). For any pair of partitions, the intersection of \Cref{eq.filt2} (for $m=|\lambda|$ and $n=|\nu|$) with $W_{\lambda,\mu}$ is a filtration of $W_{\lambda,\mu}$ by $\fg^M$-modules. It turns out that it is precisely what we are looking for:

\begin{theorem}\label{th.res2}
For any two partitions $\lambda,\mu$, the intersection of \Cref{eq.filt2} with $W_{\lambda,\mu}$ is the socle filtration of this latter module over $\fg^M$. 
\end{theorem}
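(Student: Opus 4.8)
The plan is to combine the essentiality statement in \Cref{co.ess_filt} (transported to $W_{\lambda,\mu}$ via $S_m\times S_n$-equivariance) with a computation of the associated graded pieces and an identification of the socle of each quotient module. Write $W=W_{\lambda,\mu}\subseteq(V^*)^{\otimes m}\otimes V^{\otimes n}$ with $m=|\lambda|$, $n=|\mu|$, and let $\{W^k\}$ be the filtration obtained by intersecting \Cref{eq.filt2} with $W$. First I would argue that this filtration is essential over $\fg^M$: \Cref{co.ess_filt} gives essentiality of the analogous filtration on the full traceless space, and since the Young symmetrizers cutting out $W_{\lambda,\mu}$ are built from elements of $S_m\times S_n\subseteq\fg^M$ (permuting the $V^*$-slots among themselves and the $V$-slots among themselves), the filtration $\{W^k\}$ is the image of the essential filtration under an $\fg^M$-equivariant projection, hence essential as well — one checks directly that intersecting an essential filtration with a direct summand (in the category of $\fg^M$-modules) preserves essentiality.

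Next, essentiality alone does not pin down the socle filtration; one also needs that each successive quotient $W^{k+1}/W^k$ is \emph{semisimple} over $\fg^M$. Here I would use the tensor-product decomposition behind \Cref{co.res1}: the quotient $W^{k+1}/W^k$ is, up to the trace conditions, a sum over ways of choosing which $k+1$ of the $m$ covariant slots land in $V^*/V_*$ versus $V_*$, and after taking the appropriate Schur-isotypic components it decomposes as a finite direct sum of modules of the shape $(V^*/V_*)_\alpha\otimes W'$ with $W'$ a simple object of $\bT_{\fg^M}$ obtained from the remaining $V_*$ and $V$ tensorands — exactly the situation of \Cref{co.res1}, which tells us each such summand is a simple $\fg^M$-module. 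So $W^{k+1}/W^k$ is semisimple. Combining this with essentiality, a standard lemma (essential filtration with semisimple quotients is the socle filtration, proved by induction using \Cref{re.defess}) yields the theorem: $W^k$ is the $k$-th socle layer $\mathrm{soc}^{k+1}(W)$, because $W^{k+1}/W^k$ being semisimple forces $W^{k+1}/W^k\subseteq\mathrm{soc}(W/W^k)$, while essentiality of $W^{k+1}/W^k$ in $W/W^k$ forces the reverse inclusion (a semisimple essential submodule of $N$ is all of $\mathrm{soc}(N)$).

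The main obstacle I anticipate is the careful bookkeeping in the semisimplicity step: one must verify that after imposing the tracelessness conditions and decomposing under the Schur functors, the graded piece $W^{k+1}/W^k$ really is a direct sum of the modules to which \Cref{co.res1} applies, with the correct multiplicities, and in particular that no trace relations create extensions between these simple summands inside the quotient. This amounts to checking that the natural map from the sum of the $(V^*/V_*)_\alpha\otimes(\text{traceless }V_*\text{-}V\text{ part})$ pieces to $W^{k+1}/W^k$ is an isomorphism of $\fg^M$-modules — plausibly a formal consequence of the semisimplicity of the symmetric group actions and the fact that $V^*/V_*$, $V_*$, $V$ are "independent" in the relevant sense, but requiring a genuine argument. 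The essentiality step, by contrast, I expect to be routine given \Cref{co.ess_filt}, and the final bootstrap from (essential $+$ semisimple quotients) to (socle filtration) is a standard homological fact that I would state and prove as a short lemma before the main argument.
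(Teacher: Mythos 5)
Your proposal is correct and follows essentially the paper's own route: essentiality of the intersected filtration (the paper simply re-runs the proof of \Cref{pr.ess_filt} with $W_{\lambda,\mu}$ in place of the full tensor product, while you transport \Cref{co.ess_filt} through the $\fg^M$-equivariant Young-symmetrizer projection, which works just as well), combined with semisimplicity of the layers coming from \Cref{co.res1}. The bootstrap lemma you state (an essential filtration with semisimple quotients is the socle filtration) and the layer identification you flag as the remaining bookkeeping are precisely the ingredients the paper leaves implicit in its one-line proof and records in \Cref{co.res2}, so there is no substantive divergence or gap.
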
   
\begin{proof}
Immediate by the proof of \Cref{pr.ess_filt}: simply work with $W_{\lambda,\mu}$ instead of $(V^*)^{\otimes|\lambda|}\otimes V^{\otimes n}$. 
\end{proof}

We now rephrase the theorem slightly, to give a more concrete description of the quotients in the socle filtration. To this end, recall that the ring $\cat{Sym}$ of symmetric functions is a Hopf algebra over $\bZ$ (e.g. $\S$2 of \cite{Haz}), with comultiplication $\Delta$, say. We regard partitions as elements of $\cat{Sym}$ by identifying them with the corresponding Schur functions, and we always think of $\Delta(\lambda)$ as a $\bZ$-linear combination of tensor products $\mu\otimes\nu$ of partitions. 

For a partition $\lambda$ we denote $\Delta(\lambda)$ by $\lambda_{(1)}\otimes\lambda_{(2)}$. Note that this is is a slight notational abuse, as $\Delta(\lambda)$ is not a simple tensor but rather a sum of tensors; we are suppressing the summation symbol to streamline the notation. The summation suppression extends to Schur functors: The expression $M_{\lambda_{(1)}}\otimes M_{\lambda_{(2)}}$, for instance, denotes a direct sum over all summands $\mu\otimes\nu$ of $\Delta(\lambda)$.

Finally, one last piece of notation: For an element $\nu\in\cat{Sym}$ and $k\in\bN$, we denote by $\nu^k \in\cat{Sym}$ the degree-$k$ homogeneous component of $\nu$ with respect to the usual grading of $\cat{Sym}$.  

We can now state the following consequence of \Cref{th.res2}, whose proof we leave to the reader (it consists simply of running through the definition of the comultiplication of $\cat{Sym}$). Recall that we denote simple modules in $\bT_{\End(V)}$ by $W_{\lambda,\mu}$; similarly, simple modules in $\bT_{\fg^M}$ are denoted by $V_{\mu,\nu}$.

\begin{corollary}\label{co.res2}
Let $\lambda,\mu$ be two partitions. The semisimple subquotient $W^k/W^{k-1}$, $k\ge 0$ of the socle filtration $0=W^{-1}\subseteq W^0\subseteq W^1\ldots\ $ of $W_{\lambda,\nu}$ in ${\fg^M}-\mathrm{mod}$ is isomorphic to 
\[
	(V^*/V_*)_{\lambda_{(1)}^k}\otimes V_{\lambda_{(2)}^{|\lambda|-k},\mu}.
\] 
\end{corollary}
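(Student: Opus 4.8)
The plan is to derive \Cref{co.res2} purely as a bookkeeping consequence of \Cref{th.res2}, \Cref{co.res1}, and the combinatorics of the Hopf algebra $\cat{Sym}$; no new representation-theoretic input is needed. First I would recall the explicit description: $W_{\lambda,\mu}$ is the module of traceless tensors in $(V^*)^{\otimes|\lambda|}\otimes V^{\otimes|\mu|}$ cut out by the Young symmetrizers for $\lambda$ (acting on the $V^*$-factors) and $\mu$ (acting on the $V$-factors), and by \Cref{th.res2} its socle filtration over $\fg^M$ is $\{W_{\lambda,\mu}\cap W^k\}$ with $W^k$ as in \Cref{eq.filt2}. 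So the whole task is to identify the associated graded piece $W^k/W^{k-1}$ with a semisimple $\fg^M$-module.

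The key step is the passage from $V^*$ to the short exact sequence $0\to V_*\to V^*\to V^*/V_*\to 0$ of $\fg^M$-modules. On the level of the subquotient $W^k/W^{k-1}$ of the length filtration by number of $V_*$-tensorands, the $|\lambda|$ copies of $V^*$ contribute exactly $k$ copies of the ``big'' quotient $V^*/V_*$ and $|\lambda|-k$ copies of $V_*$ (this is essentially the statement that the associated graded of the $V^*$-factors with respect to the $V_*$-subobject is $\bigotimes(V^*/V_*)\oplus\cdots$, sorted by how many factors land in the quotient). Imposing the Young symmetrizer $c_\lambda$ and the tracelessness conditions then turns the $S_{|\lambda|}$-action into a branching: the $k$ ``quotient'' slots see $(V^*/V_*)_{\lambda_{(1)}}$ and the $|\lambda|-k$ ``$V_*$'' slots, together with all the $V$-factors and the tracelessness, assemble into the simple $\bT_{\fg}$-object $V_{\lambda_{(2)},\mu}$ — with the caveat that we must restrict $\lambda_{(1)}$ to its degree-$k$ part and $\lambda_{(2)}$ to its degree-$(|\lambda|-k)$ part, which is exactly the notation $\lambda_{(1)}^k$, $\lambda_{(2)}^{|\lambda|-k}$ introduced just above. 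The combinatorial engine behind the appearance of $\Delta(\lambda)=\lambda_{(1)}\otimes\lambda_{(2)}$ is the classical fact that restricting the Schur functor $\mathbb{S}_\lambda(A\oplus B)$ decomposes as $\bigoplus_{\mu,\nu}c^\lambda_{\mu\nu}\,\mathbb{S}_\mu(A)\otimes\mathbb{S}_\nu(B)$, i.e. via the comultiplication of $\cat{Sym}$; applied to $A=V^*/V_*$ and $B=V_*$ (with the degree constraints coming from the fixed number $k$ of quotient factors) this produces precisely the claimed summands. Finally, \Cref{co.res1} guarantees that each resulting $(V^*/V_*)_{\lambda_{(1)}^k}\otimes V_{\lambda_{(2)}^{|\lambda|-k},\mu}$ is a simple $\fg^M$-module, so the displayed direct sum really is the semisimple layer $W^k/W^{k-1}$.

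Concretely, I would carry out the steps in this order: (1) state the branching rule for Schur functors under a direct sum decomposition and rewrite it using $\Delta$ on $\cat{Sym}$; (2) apply it to the filtration quotient, matching ``$k$ tensorands in $V^*/V_*$'' with ``degree $k$ in the first $\cat{Sym}$-factor'' and checking that the tracelessness conditions on $W_{\lambda,\mu}$ restrict compatibly to pick out $V_{\lambda_{(2)}^{|\lambda|-k},\mu}$ rather than the full traceless tensor space on the $V_*$- and $V$-factors; (3) invoke \Cref{co.res1} for simplicity of each summand; (4) conclude via \Cref{th.res2} that this is the socle layer. As the excerpt itself says, the proof ``consists simply of running through the definition of the comultiplication of $\cat{Sym}$,'' so I would keep it short.

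The main obstacle — really the only non-formal point — is step (2): verifying that the tracelessness (``traceless tensor'') conditions defining $W_{\lambda,\mu}\subseteq(V^*)^{\otimes|\lambda|}\otimes V^{\otimes|\mu|}$ interact correctly with the length filtration, so that the $V_*$-factors in a given layer together with the $V$-factors contract down to the \emph{simple} module $V_{\lambda_{(2)}^{|\lambda|-k},\mu}$ and not to a larger traceless-tensor space or a non-simple Schur-functor image. This is where one must be careful that the contractions pairing a $V_*$ against a $V$ are exactly the evaluation maps used in \cite{Mackey} to define the simples $V_{\mu,\nu}$ of $\bT_{\fg}$, and that after taking the $\fg^M$-action (which factors through $\fg$ on the $V_*$- and $V$-parts and through $\fg^M/\fg$ on the $V^*/V_*$-part) the tensor-product decomposition of \Cref{co.res1} is the right one. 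Once that compatibility is in place, everything else is the Hopf-algebra bookkeeping described above.
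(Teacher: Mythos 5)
Your proposal is correct and coincides with the proof the paper intends (and leaves to the reader): combine \Cref{th.res2} with the Schur-functor branching rule encoded by the comultiplication of $\cat{Sym}$ applied to the associated graded of $V_*\subseteq V^*$, and use \Cref{co.res1} to identify the resulting summands as simple $\fg^M$-modules. Your flagged point (2), that contractions involving a $V^*/V_*$-slot die on the graded layer so only the $V_*$-against-$V$ trace conditions survive, is exactly the small verification the paper implicitly delegates, and your treatment of it is adequate.
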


Finally, it seems likely that as a category of $\fg^M$-modules, $\bT_{\End(V)}$ has a universal property of its own, reminiscent of \Cref{th.SS}. Denoting by $\bT_{\cat{res}}$ the full (tensor) subcategory of ${\fg^M}-\mathrm{mod}$ on the objects of $\bT_{\End(V)}$, the following seems sensible.

\begin{conjecture}\label{conj.univ}
For any tensor category $\cC$ with monoidal unit ${\bf 1}$, any morphism $b:x\otimes y\to{\bf 1}$ in $\cC$, and any subobject $x'\subseteq x$, there is a left exact tensor functor $F:\bT_{\cat{res}}\to\cC$ sending the pairing $V^*\otimes V\to\bC$ to $b$ and turning the inclusion $V_*\subseteq V^*$ into $x'\subseteq x$. Moreover, $F$ is unique up to tensor natural isomorphism. 
\end{conjecture}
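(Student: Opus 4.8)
The plan is to mirror the strategy that proved \Cref{th.main}, extending \Cref{th.SS} from a pairing alone to a pairing together with a distinguished subobject of its left tensorand. First I would establish existence of the functor $F$. The category $\bT_{\cat{res}}$ is built from the generating objects $V^*$, $V$ and the subobject $V_*\subseteq V^*$; since $\bT_{\End(V)}$ already receives a universal left exact tensor functor into $\cC$ sending $V^*\otimes V\to\bC$ to $b$ by \Cref{th.SS}, and since by \Cref{th.res2} every object of $\bT_{\cat{res}}$ has a canonical finite filtration whose subquotients are built functorially out of $(V^*/V_*)_{\lambda_{(1)}}$ and the simple $\fg^M$-modules $V_{\lambda_{(2)},\mu}$ (see \Cref{co.res2}), the extra datum $V_*\subseteq V^*$ is exactly what is needed to split these filtrations along $x'\subseteq x$. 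Concretely, I would first define $F$ on the generators by $V^*\mapsto x$, $V\mapsto y$, $V_*\mapsto x'$, then extend to all traceless-tensor objects $W_{\lambda,\mu}$ by applying the appropriate Schur functors and Young symmetrizers (these make sense in any $\bC$-linear tensor category), and finally extend to subquotients using left exactness, just as in the proof of \Cref{th.SS}. That morphisms extend correctly should follow from the fact, used in the proof of \Cref{th.main} and attributed to \cite{PS}, that all morphisms between tensor powers of $V^*$ and $V$ are generated by $b$, permutations, tensor products and linear combinations; the only new morphisms in $\bT_{\cat{res}}$ are the structure maps of the filtration \Cref{eq.filt2}, which are determined by the inclusion $V_*\subseteq V^*$.

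Next I would prove uniqueness up to tensor natural isomorphism. Given two such functors $F_1,F_2$, their restrictions to $\bT_{\End(V)}$ agree up to tensor natural isomorphism by \Cref{th.SS}; it remains to check that this isomorphism is compatible with the image of $V_*\subseteq V^*$. Since $V_*$ is a subobject of $V^*$ and both functors are left exact, the natural isomorphism $F_1(V^*)\cong F_2(V^*)$ must carry $F_1(V_*)$ to $F_2(V_*)$ provided both equal the prescribed $x'\subseteq x$; compatibility with tensor structure then propagates this agreement to all of $\bT_{\cat{res}}$ by the same generation-by-$b$ argument. Essentially, once the pairing and the subobject are pinned down, everything else is forced, exactly as in \Cref{co.main}.

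The main obstacle I expect is \textbf{well-definedness of $F$ on morphisms}, specifically verifying that the relations holding among morphisms in $\bT_{\cat{res}}$ — which now include not only the ``trace'' relations from $b$ but also the interaction between the subobject inclusion $V_*\hookrightarrow V^*$ and these traces — are all consequences of the universal relations in the target $\cC$. In $\bT_{\End(V)}$ this was handled cleanly by \cite{PS}; for $\bT_{\cat{res}}$ one needs an analogous presentation of the morphism spaces that incorporates the idempotents or inclusions cutting out $V_*$. A second, more structural difficulty is that $\bT_{\cat{res}}$, unlike $\bT_{\End(V)}$, may fail to have enough injectives among the ``obvious'' objects, so the reduction ``prove it on $(V^*)^{\otimes m}\otimes V^{\otimes n}$, then extend by left exactness'' needs to be justified more carefully — perhaps by working directly with the essential filtrations of \Cref{co.ess_filt} and \Cref{th.res2}, which give a functorial handle on every object. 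I would expect that a careful analysis along these lines closes the argument, but making the presentation of $\mathrm{Hom}$-spaces in $\bT_{\cat{res}}$ precise is where the real work lies, and it is plausibly the reason this is stated as a conjecture rather than a theorem.
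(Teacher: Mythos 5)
First, note that the paper itself offers no proof of this statement: \Cref{conj.univ} is left as an open conjecture, so your proposal has to stand entirely on its own, and as written it is a strategy outline rather than a proof. The decisive step is missing, and you in fact name it yourself. For $\bT_{\End(V)}$, the ``define $F$ on $(V^*)^{\otimes m}\otimes V^{\otimes n}$ and extend by left exactness'' scheme works because of the Sam--Snowden machinery behind \Cref{th.SS}, which in turn rests on two hard inputs: these tensor products are injective in $\bT_{\End(V)}$ and exhaust the indecomposable injectives, and all morphisms between them are generated by the pairing (Penkov--Styrkas). Neither input is available for $\bT_{\cat{res}}$: its morphisms are all $\fg^M$-module maps, an a priori larger class (this is exactly why the simple objects of $\bT_{\End(V)}$ acquire nontrivial socle filtrations over $\fg^M$, \Cref{th.res2}), and your claim that ``the only new morphisms are the structure maps of the filtration \Cref{eq.filt2}, determined by the inclusion $V_*\subseteq V^*$'' is precisely the unproven presentation of the spaces $\mathrm{Hom}_{\fg^M}\bigl((V^*)^{\otimes m}\otimes V^{\otimes n},(V^*)^{\otimes m'}\otimes V^{\otimes n'}\bigr)$ on which everything hinges. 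Nothing in the paper computes these Hom-spaces or shows that the generating objects are injective over $\fg^M$ in a suitable category (indeed, even stating the conjecture cleanly requires enlarging $\bT_{\cat{res}}$ so that $\fg^M$-subobjects such as $V_*$ are objects at all). Without such a description, ``define on generators and extend'' does not determine a functor, let alone a left exact tensor one.

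The uniqueness half has the same circularity: composing $F_1,F_2$ with the identity-on-objects, non-full restriction functor $\bT_{\End(V)}\to\bT_{\cat{res}}$ and invoking \Cref{th.SS} only yields a tensor natural isomorphism with respect to morphisms coming from $\bT_{\End(V)}$; naturality with respect to the genuinely new $\fg^M$-morphisms, and its compatibility with $F_i(V_*)=x'$, is again exactly what the missing Hom-space analysis would have to supply, so ``propagates by the same generation-by-$b$ argument'' assumes what is to be proved. A smaller but real slip: appealing to \Cref{th.res2} and \Cref{co.res2} to ``split the filtrations along $x'\subseteq x$'' cannot be right as stated, since $F$ is only required to be left exact and need not carry socle filtrations to filtrations with prescribed subquotients in $\cC$. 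In short, you have identified the right analogy with \Cref{th.main} and \Cref{th.SS}, and correctly located the obstruction, but the actual content --- an analogue of the Penkov--Styrkas/Sam--Snowden description of morphisms and injectives for $\bT_{\cat{res}}$ --- is not provided, which is consistent with the statement remaining a conjecture in the paper.
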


%%%%%%%%%%%%%%%%%%%%%%%%%%%%%%%%%%%%%%%%%%%%%%%%%%%%%%%%%%%%%%%%%%%%%%%%%%%%%%%%%%%%%%%%%%%%%%%%%%%%%%%%%%%%%%%%%%
%%%%%%%%%%%%%%%%%%%%%%%%%%%%%%%%%%%%%%%%%%%%%%%%%%%%%%%%%%%%%%%%%%%%%%%%%%%%%%%%%%%%%%%%%%%%%%%%%%%%%%%%%%%%%%%%%%

%%%%%%%%%%%%%%%%%%%%%%%%%%%%%%%%%%%%%%%%%%%%%%%%%%%%%%%%%%%%%%%%%%%%%%%%%%%%%%%%%%%%%%%%%%%%%%%%%%%%%%%%%%%%%%%%%%
%%%%%%%%%%%%%%%%%%%%%%%%%%%%%%%%%%%%%%%%%%%%%%%%%%%%%%%%%%%%%%%%%%%%%%%%%%%%%%%%%%%%%%%%%%%%%%%%%%%%%%%%%%%%%%%%%%

%\bibliography{Mackey}{}
\bibliographystyle{plain}
\addcontentsline{toc}{section}{References}

\end{document}